\newtheorem{theorem}{Theorem}[section]
\newtheorem{lemma}[theorem]{Lemma}
\newtheorem{proposition}[theorem]{Proposition}
\theoremstyle{definition}
\newtheorem{remark}[theorem]{Remark}
\newcommand{\CC}{\mathbb C}
\newcommand{\PP}{\mathbb P}
\newcommand{\RR}{\mathbb R}
\newcommand{\ZZ}{\mathbb Z}
\newcommand{\cD}{\mathcal D}
\newcommand{\SL}{\mathop{\mathrm {SL}}\nolimits}
\newcommand{\Orth}{\mathop{\null\mathrm {O}}\nolimits}
\newcommand{\rank}{\mathop{\mathrm {rank}}\nolimits}
\newcommand{\latt}[1]{{\langle{#1}\rangle}}
\newcommand{\m}{\operatorname{mod}}
\newcommand{\II}{\operatorname{II}}
\newcommand{\abs}[1]{\lvert#1\rvert}
\begin{document}

\title[Reflective modular forms on lattices of prime level]{Reflective modular forms on lattices of prime level}

\author{Haowu Wang}

\address{Center for Geometry and Physics, Institute for Basic Science (IBS), Pohang 37673, Korea}

\email{haowu.wangmath@gmail.com}

\subjclass[2020]{11F03, 11F50, 11F55}

\date{\today}

\keywords{Reflective modular forms, Borcherds products, lattices, Jacobi forms}

\begin{abstract}
One of the main open problems in the theory of automorphic products is to classify reflective modular forms.  In \cite{Sch06} Scheithauer classified strongly reflective modular forms of singular weight on lattices of prime level.  In this paper  we classify symmetric reflective modular forms on lattices of prime level. This yields a full classification of lattices of prime level which have reflective modular forms. We also present some applications.
\end{abstract}

\maketitle

\section{Introduction}
Let $M$ be an even lattice of signature $(n,2)$ with $n\geq 3$ and $M^\vee$ be its dual lattice.  The type IV Hermitian symmetric domain $\cD(M)$ is defined as one of the two conjugate connected components of the space
\begin{equation*}
\{[\mathcal{Z}] \in  \PP(M\otimes \CC):  (\mathcal{Z}, \mathcal{Z})=0, (\mathcal{Z},\bar{\mathcal{Z}}) < 0\}.
\end{equation*}
Let $\Orth^+ (M) < \Orth(M)$ be the subgroup preserving $\cD(M)$. A  \textit{modular form} of weight $k\in \ZZ$ and character $\chi$ with respect to a finite index subgroup $\Gamma<\Orth^+ (M)$ is a holomorphic function on the affine cone of $\cD(M)$ satisfying
\begin{align*}
F(t\mathcal{Z})&=t^{-k}F(\mathcal{Z}), \quad \forall t \in \CC^*,\\
F(g\mathcal{Z})&=\chi(g)F(\mathcal{Z}), \quad \forall g\in \Gamma.
\end{align*}

The theory of automorphic Borcherds products (see \cite{Bor95, Bor98}) provides a remarkable method to construct modular forms whose divisor is a linear combination of rational quadratic divisors $\gamma^\perp:=\{ [v]\in \cD(M): (v,\gamma)=0\}$.  A modular form (for $\Gamma$) is called \textit{reflective} if its zeros are contained in the union of rational quadratic divisors $\gamma^\perp$ associated to roots of $M$, namely the reflection
$$
\sigma_\gamma: l\mapsto l-\frac{2(l,\gamma)}{(\gamma,\gamma)}\gamma
$$
belongs to $\Orth^+(M)$. A lattice is called \textit{reflective} if it has a reflective modular form.
Let $\widetilde{\Orth}^+(M)<\Orth^+ (M)$ denote the subgroup acting trivially on the discriminant form $M^\vee/M$. Bruinier's converse theorem says that every reflective modular form for $\widetilde{\Orth}^+(M)$ is a Borcherds product if $M$ splits two hyperbolic planes (see \cite{Bru02, Bru14}). Reflective modular forms have many applications in generalized Kac--Moody algebras, algebraic geometry, and reflection groups (see \cite{Bor00, GN98, Sch06, GHS07, GH14} and a survey \cite{Gri18}). It is an open problem to classify reflective modular forms and their underlying lattices since 1998. In the past twenty years, some classification results have been obtained in \cite{GN98, Sch06, Sch17, Ma17, Ma18, Dit18, Wan18, Wan19}.

In 2006 Scheithauer \cite{Sch06} classified all strongly reflective modular forms of singular weight on lattices of prime level and derived a classification of generalized Kac--Moody algebras. In the present paper,  we classify symmetric reflective modular forms on lattices of prime level without any restriction on the weight and the multiplicity of divisors.  Here,  a modular form is called symmetric if it is modular for $\Orth^+(M)$.  It is known that $M$ is reflective if and only if $M$ has a symmetric reflective modular form.  Thus the above classification implies a full classification of reflective lattices of prime level. 

Let $M$ be a lattice of prime level $p$.  Since $M$ and $M^\vee(p)$ have the same full orthogonal group,  $M$ is reflective if and only if $M^\vee(p)$ is reflective,  and a symmetric reflective modular form on $M$ corresponds to a symmetric reflective modular form on $M^\vee(p)$.  Therefore,  we only need to consider lattices of signature $(n,2)$, level $p$ and determinant $p^r$ with $1\leq r\leq 1+n/2$ (see \S \ref{Sec:basic lemmas}). The following is our main result.

\begin{theorem}\label{th:main}
All reflective lattices of genus $\II_{n,2}(p^{\epsilon_p n_p})$ with $1\leq n_p\leq 1+n/2$ are as follows
\begin{align*}
&\mathbf{p=2}& &\II_{6,2}(2_{\II}^{-2})& &\II_{6,2}(2_{\II}^{-4})& &\II_{10,2}(2_{\II}^{+2})& &\II_{10,2}(2_{\II}^{+4})& &\II_{10,2}(2_{\II}^{+6})& \\  
&& &\II_{14,2}(2_{\II}^{-2})& &\II_{14,2}(2_{\II}^{-4})& &\II_{14,2}(2_{\II}^{-6})& &\II_{14,2}(2_{\II}^{-8})& &\II_{18,2}(2_{\II}^{+2})&  \\
&& &\II_{18,2}(2_{\II}^{+4})& &\II_{18,2}(2_{\II}^{+6})& &\II_{18,2}(2_{\II}^{+8})& &\II_{18,2}(2_{\II}^{+10})& &\II_{22,2}(2_{\II}^{-2})& \\
&\mathbf{p=3}& &\II_{4,2}(3^{-1})& &\II_{4,2}(3^{+3})& &\II_{6,2}(3^{+2})& &\II_{6,2}(3^{-4})& &\II_{8,2}(3^{+1})& \\
&& &\II_{8,2}(3^{-3})& &\II_{8,2}(3^{+5})& &\II_{10,2}(3^{-2})& &\II_{10,2}(3^{+4})& &\II_{10,2}(3^{-6})& \\
&& &\II_{12,2}(3^{-1})& &\II_{12,2}(3^{+3})& &\II_{12,2}(3^{-5})& &\II_{12,2}(3^{+7})& &\II_{14,2}(3^{+2})&  \\
&& &\II_{14,2}(3^{-4})& &\II_{14,2}(3^{+6})& &\II_{14,2}(3^{-8})& &\II_{20,2}(3^{-1})&\\
&\mathbf{p=5}& &\II_{6,2}(5^{+1})& &\II_{6,2}(5^{-2})& &\II_{6,2}(5^{+3})& &\II_{6,2}(5^{-4})& &\II_{10,2}(5^{+2})& \\
&& &\II_{10,2}(5^{+4})& &\II_{10,2}(5^{+6})& &\II_{10,2}(5^{-1})&\\
&\mathbf{p=7}& &\II_{4,2}(7^{+1})& &\II_{4,2}(7^{-3})& &\II_{6,2}(7^{+2})& &\II_{6,2}(7^{-4})& &\II_{8,2}(7^{-1})&\\
&& &\II_{8,2}(7^{+3})& &\II_{8,2}(7^{-5})& \\
&\mathbf{p=11}& &\II_{4,2}(11^{-1})& &\II_{4,2}(11^{+3})& &\II_{6,2}(11^{+2})& &\II_{6,2}(11^{-4})&\\
&\mathbf{p=23}& &\II_{4,2}(23^{+1})& &\II_{4,2}(23^{-3})&
\end{align*}
The associated symmetric reflective modular forms are constructed in \S\ref{Sec:construction}.
\end{theorem}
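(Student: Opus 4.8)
The plan is to treat the two halves of the classification separately. Existence of a reflective modular form on each listed genus is deferred to \S\ref{Sec:construction}, where each form is produced explicitly (as a Borcherds product or a Gritsenko lift) and its divisor is checked to lie on reflective hyperplanes; I concentrate here on \emph{completeness}, namely that no genus of prime level outside the list carries a reflective form.

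The first step is to reduce to Borcherds products. Since reflectivity and the genus symbol $\II_{n,2}(p^{\epsilon_p n_p})$ are unchanged under adding hyperbolic planes, I may assume $M$ splits two copies of $U$; then by Bruinier's converse theorem every reflective modular form for $\widetilde{\Orth}^+(M)$ is a Borcherds product, with input a weakly holomorphic vector-valued modular form $F=\sum c(\gamma,m)q^m e_\gamma$ of weight $1-n/2$ for the Weil representation attached to the discriminant form $M^\vee/M\cong(\ZZ/p)^{n_p}$. Reflectivity is equivalent to the principal part of $F$ being supported only on the classes coming from roots; for prime level these roots are the norm-$2$ vectors in $M$ (contributing at the trivial class) and the norm-$2p$ vectors $\lambda$ with $p\lambda\in M$ (contributing at the nonzero classes of $M^\vee/M$), so the whole principal part is governed by essentially two nonnegative integers $c_0,c_p$.

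Next I would extract the defining linear relations from Borcherds' obstruction principle: a prescribed principal part is realized by some $F$ if and only if it is orthogonal to every holomorphic form of dual weight $1+n/2$ for the dual Weil representation. Pairing with the Eisenstein series $E_{1+n/2}$ expresses the weight $k=c(0,0)/2$ of the product as an explicit linear combination of $c_0$ and $c_p$, the coefficients being Eisenstein coefficients which at prime level I can evaluate in closed form via the local densities at $p$ and at $\infty$ (generalized Bernoulli numbers). Pairing with a basis of the cusp space $S_{1+n/2}$ imposes the remaining integral vanishing conditions. Together with positivity of $k$ and positivity and integrality of $c_0,c_p$, these relations are the engine of the classification.

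The decisive and hardest step is to turn these relations into a finite list. The Eisenstein coefficient attached to the root classes is a rational function of $p$ and $n$ whose integrality, together with positivity of the weight, forces arithmetic constraints: I expect the clean outcome that $p+1\mid 24$, so that $p\in\{2,3,5,7,11,23\}$, and that the total rank is bounded by $n+2\le 24$, with sharper bounds on $n$ as $p$ grows. Establishing these uniform bounds---controlling the growth of the Eisenstein coefficients against the admissible weight, and ruling out large $n$ and large $p$ through the interplay of positivity with the local densities at $p$---is the main obstacle. Once $n$ and $p$ are bounded only finitely many genera $\II_{n,2}(p^{\epsilon_p n_p})$ remain, and for each I would decide membership by testing whether the reflective principal part pairs to zero against a basis of $S_{1+n/2}$; the genera that survive are precisely those listed.
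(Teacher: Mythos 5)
Your proposal correctly mirrors the paper on the existence half (both defer to the explicit constructions of \S\ref{Sec:construction}), but the completeness half has two genuine gaps. The first is your opening reduction: you cannot ``assume $M$ splits two copies of $U$.'' Adjoining hyperbolic planes changes the signature, hence the genus $\II_{n,2}(p^{\epsilon_p n_p})$ under discussion, and within a fixed genus there is no freedom to change representatives, since at prime level each genus consists of a single class. Many of the genera that must be analyzed --- precisely those with $n_p=n/2$ or $n/2+1$ --- do not split $2U$ at all: $\II_{4,2}(3^{+3})\cong U\oplus U(3)\oplus A_2$ cannot equal $2U\oplus L$ because a rank-two positive definite $L$ cannot have discriminant group $(\ZZ/3)^3$, and $\II_{6,2}(2_{\II}^{-4})\cong U\oplus U(2)\oplus D_4$ cannot equal $2U\oplus L$ because then $L^\vee(2)$ would be an even unimodular positive definite lattice of rank $4$. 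The paper treats these lattices by a separate mechanism that is absent from your plan: Bruinier's converse theorem still applies to $U\oplus U(p)\oplus L$, but the analysis goes through Scheithauer's lifting from $\Gamma_0(p)$ and the Riemann--Roch bound $p\nu_0(f)+\nu_\infty(f)\leq \frac{p+1}{12}(1-\frac{n}{2})$ of Lemma~\ref{Lem:Dit}, together with the transfer of Remark~\ref{rem:Dit} between the models $U\oplus U(p)\oplus L$ and $2U\oplus L$.

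The second, more serious gap is that the finiteness step --- which you yourself call the decisive and hardest step --- is asserted rather than proved. ``I expect the clean outcome that $p+1\mid 24$'' is not an argument, and the expectation that such a bound drops out of positivity and integrality of Eisenstein coefficients is not borne out by the actual structure of the problem. For example, for $p=19$, $n=4$, $n_p=1$, a hypothetical reflective form on $2U\oplus L_{19}$ has root system $A_1\oplus A_1(19)$, multiplicities $c_1=1$, $c_p=19$, and weight $k=35-p=16$; this survives positivity, integrality, and even the singular-weight bound $k\geq(1+p)/2$, and the paper must invoke Scheithauer's extra equation from \cite[\S 11]{Sch06} involving $B_{3,\psi}$ to exclude it. Likewise, the cases $p\equiv 1 \bmod 4$ with $p\geq 13$ (where $2p$-divisors are first excluded via \cite[Proposition 3.2]{Sch06} and one then argues with rank-full $ADE$ root systems), and the large-$n$ cases for small $p$, are handled in the paper by the Jacobi-forms machinery of Proposition~\ref{Prop:Jacobi} --- $R(L)$ generates $L\otimes\RR$, all irreducible components share a Coxeter number, the explicit weight formula, and the inequality \eqref{eq:singular} --- combined with \cite[Theorem 4.9]{Wan18} and Lemma~\ref{Lem:Dit}. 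None of these inputs appears in your proposal, and without them the Eisenstein/obstruction pairing alone does not reduce the problem to a finite list of triples $(n,p,n_p)$, let alone to the list in the theorem.
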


Twelve of the above lattices appeared in \cite[Theorem 6.7]{Sch17} and Scheithauer constructed reflective modular forms of singular weight on them. When $M$ is of level one (i.e. unimodular), it is well-known that $M$ is reflective if the signature takes $(10,2)$, $(18,2)$ or $(26,2)$. By \cite{Ma17}, a lattice of signature $(n,2)$ is never reflective if $n > 26$. Thus there are exactly $3$ reflective unimodular lattices and the corresponding reflective modular forms are unique.  

We explain the main idea of the proof. Combining the arguments in \cite{Sch06, Dit18} and the Jacobi forms approach introduced in \cite{Wan18, Wan19} together, we are able to prove that every lattice not in the above list is not reflective. We use the quasi pull-back trick in \cite{BKP98}, the lifting from scalar-valued $\SL_2(\ZZ)$-modular forms to modular forms for the Weil representation in \cite{Bor00} and the obstruction principle in \cite{Bor99} to construct reflective modular forms.

This paper is organized as follows. In \S \ref{Sec:basic lemmas} we fix some notations and recall some known useful results about reflective modular forms. In \S \ref{Sec:Jacobi forms approach} we introduce the Jacobi forms approach. \S \ref{Sec:construction} is devoted to the construction of reflective modular forms. In \S \ref{Sec:proof} we prove our main theorem. In \S \ref{Sec:applications} we give some applications to the Kodaira dimension of orthogonal modular varieties. Two new examples of orthogonal modular varieties of Kodaira dimension $0$ and geometric genus $1$ are presented.
We also explain how to determine the class number of the genus of a lattice using our approach.

\section{Notations and basic lemmas}\label{Sec:basic lemmas}
Let $M$ be an even lattice of signature $(n,m)$ with bilinear form $(-,-)$. We denote the dual lattice of $M$ by $M^\vee$. The \textit{level} of $M$ is the smallest positive integer $N$ such that $N(x,x)\in 2\ZZ$ for all $x\in M^\vee$. The discriminant form of $M$ denoted $M^\vee / M$ can decompose into a sum of indecomposable Jordan components, and we denote this decomposition by $D_M$. The \textit{genus} of $M$ is the set of lattices which have the same signature and the same discriminant form (up to isomorphism) as $M$.  We use the standard notation $\II_{n,m}(D_M)$ to stand for the genus of $M$. We refer to \cite{CS99, Nik80} for the general theory and \cite[\S 3]{Sch06} for a brief introduction. 

 We now assume that $m=2$ and $n\geq 3$. 
 As in \cite[Lemma 2.2]{Ma17}, we show that if $M$ carries a reflective modular form for a finite-index subgroup $\Gamma < \Orth^+ (M)$ then $M$ also has a reflective modular form for any other finite-index subgroup $\Gamma' < \Orth^+ (M)$. Thus $M$ is reflective if and only if it has a reflective modular form for $\Orth^+(M)$. In this paper we only consider reflective modular forms with respect to $\Orth^+(M)$. Such modular forms are called \textit{symmetric} in Scheithauer's papers \cite{Sch06, Sch17}.  When the lattice $M$ has prime level,  Scheithauer established a bound on the signature of reflective Borcherds products not invariant under $\Orth^+(M)$ (see \cite[Theorem 6.5]{Sch17}).

We further assume that the signature $(n,2)$ lattice $M$ has prime level $p$. 
Then $M^\vee(p)$ is an even lattice of level $p$ or level one in which case it is unimodular. Since 
\begin{equation}\label{eq:O}
    \Orth^+(M)=\Orth^+(M^\vee)=\Orth^+(M^\vee(p)),
\end{equation}
a symmetric reflective modular form on $M$ can be viewed as a symmetric reflective modular form on $M^\vee(p)$. Thus $M$ is reflective if and only if $M^\vee(p)$ is reflective. 

In view of the above fact, throughout this paper we only consider lattices of genus $\II_{n,2}(p^{\epsilon_p n_p})$, where $n\geq 3$, $p$ is a prime number, $\epsilon_p=-$ or $+$, $1\leq n_p\leq n/2+1$.  By the Jordan decomposition of discriminant forms and the oddity formula (see \cite[\S 3]{Sch06}), $\epsilon_p$ is completely determined by $n$, $p$ and $n_p$. Therefore,  if two lattices of signature $(n,2)$ and prime level $p$ have the same determinant then they are isomorphic. Moreover, it is easy to derive the following facts
\begin{enumerate}
\item When $p=2$, the number $n-2$ is divisible by $4$ and $n_p$ is even;
\item When $p\equiv 1 \mod 4$, we have $n-2\in 4\ZZ$;
\item When $p\equiv 3 \mod 4$, we have $n\in 2\ZZ$. Moreover, if $n\equiv 0\mod 4$ then $n_p$ is odd; if $n\equiv 2\mod 4$ then $n_p$ is even.
\end{enumerate}

Let $M$ be such a lattice.  By \cite{Nik80}, $M$ can be represented as $U\oplus U(p)\oplus L$ or $2U\oplus L$, where $U$ is a hyperbolic plane, namely the unique even unimodular lattice of signature $(1,1)$, and $L$ is a positive definite lattice. A primitive vector $v\in M$
is reflective if and only if $(v,v)=2$ or $(v,v)=2p$ and $v/p \in M^\vee$ (see \cite[Proposition 2.5]{Sch06}).

By \cite[Proposition 5.1]{Sch15} and the Eichler criterion (see e.g. \cite[Proposition 4.1]{Gri18}), all vectors of norm $2$ in $M$ are in the same $\Orth^+(M)$-orbit, and all reflective vectors of norm $2p$ in $M$ are also in the same $\Orth^+(M)$-orbit. Therefore, for a symmetric reflective modular form, all $2$-reflective divisors have the same multiplicity denoted by $c_1$, and all $2p$-reflective divisors have the same multiplicity denoted by $c_p$.  A symmetric reflective modular form is called \textit{strongly} if it has only simple zeros (i.e. $c_1, c_p\leq 1$). A symmetric reflective modular form is called \textit{$2$-reflective} (resp. \textit{$2p$-reflective}) if $c_p=0$ (resp. $c_1=0$). A lattice $M$ is called \textit{$2$-reflective} (resp. \textit{$2p$-reflective}) if it has a $2$-reflective (resp. $2p$-reflective) modular form.

\begin{lemma}[Lemma 2.3 in \cite{Ma17}]\label{Lem:reductionMa}
If $M$ is $2$-reflective, then any even overlattice $M'$ of $M$ is also $2$-reflective.  
\end{lemma}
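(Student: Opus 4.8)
The plan is to reduce the statement to the fact---quoted above and proved as in \cite[Lemma 2.2]{Ma17}---that reflectivity is insensitive to the choice of finite-index subgroup, by exhibiting a common finite-index subgroup of $\Orth^+(M)$ and $\Orth^+(M')$ and then symmetrising a given $2$-reflective form.

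First I would record the geometric input. Since $M'$ is an overlattice of $M$, we have $M\otimes\RR=M'\otimes\RR$, so the two lattices share the same affine cone and $\cD(M)=\cD(M')$; in particular a modular form attached to one is a holomorphic function on the cone of the other. Moreover every norm-$2$ vector $\gamma\in M$ lies in $M'$, still satisfies $(\gamma,\gamma)=2$, and the reflection $\sigma_\gamma(l)=l-(l,\gamma)\gamma$ has integral coefficients, hence preserves $M'$ and is $2$-reflective for $M'$. Thus the rational quadratic divisors that can carry a $2$-reflective form for $M$ form a subfamily of the norm-$2$ divisors of $M'$.

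Next I would produce a common finite-index subgroup. I claim $\Gamma:=\widetilde{\Orth}^+(M)$ is contained in $\widetilde{\Orth}^+(M')$. Indeed, an element $g$ acting trivially on $M^\vee/M$ fixes the isotropic subgroup $M'/M$ pointwise, hence preserves $M'$; from the chain $M\subseteq M'\subseteq {M'}^\vee\subseteq M^\vee$ the discriminant form ${M'}^\vee/M'$ is a subquotient of $M^\vee/M$, so $g$ also acts trivially on it, i.e. $g\in\widetilde{\Orth}^+(M')$. Since $\widetilde{\Orth}^+(M)$ and $\widetilde{\Orth}^+(M')$ are finite-index (they are the kernels of the actions on the respective finite discriminant forms), $\Gamma$ is a finite-index subgroup of both $\Orth^+(M)$ and $\Orth^+(M')$.

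Finally I would symmetrise. Let $F$ be a $2$-reflective form for $\Orth^+(M)$ with character $\chi$; restricting $\chi$ to $\Gamma$, the function $F$ is a modular form on $\cD(M')=\cD(M)$ for $\Gamma<\Orth^+(M')$ whose divisor is supported on norm-$2$ vectors of $M'$. Choosing coset representatives $g_1,\dots,g_m$ of $\Gamma\backslash\Orth^+(M')$, set $\tilde F(\mathcal Z)=\prod_{i=1}^m F(g_i\mathcal Z)$. A standard computation, writing $g_ih=\gamma_i g_{\sigma(i)}$ with $\gamma_i\in\Gamma$ and $\sigma$ a permutation, shows $\tilde F(h\mathcal Z)=\big(\prod_i\chi(\gamma_i)\big)\tilde F(\mathcal Z)$ for all $h\in\Orth^+(M')$, so $\tilde F$ is modular of weight $m\cdot\mathrm{wt}(F)$ for $\Orth^+(M')$. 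Its divisor equals $\sum_i g_i^{-1}(\operatorname{div}F)$, and each component $(g_i^{-1}\gamma)^\perp$ comes from $g_i^{-1}\gamma\in M'$ of norm $2$; hence $\tilde F$ is $2$-reflective for $\Orth^+(M')$ and $M'$ is $2$-reflective. (Alternatively, once $F$ is a reflective form for the finite-index subgroup $\Gamma<\Orth^+(M')$, the quoted result already yields reflectivity for $\Orth^+(M')$; the explicit product is only needed to see that $2$-reflectivity, not merely reflectivity, is preserved.) The only real obstacle is the group-theoretic point of the third paragraph: $\Orth^+(M)$ and $\Orth^+(M')$ are in general incomparable, so one cannot simply restrict or extend $F$, and everything rests on locating the common subgroup $\widetilde{\Orth}^+(M)$ together with the observation that every element of $\Orth^+(M')$ sends norm-$2$ vectors of $M'$ to norm-$2$ vectors of $M'$, which is exactly what keeps the symmetrised divisor $2$-reflective.
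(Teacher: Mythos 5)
Your argument is correct, and it is in substance the proof of the result the paper is quoting: the paper itself gives no proof of this lemma (it is cited directly from \cite[Lemma 2.3]{Ma17}), and Ma's argument is exactly your route --- norm-$2$ vectors of $M$ stay norm-$2$ reflective vectors of $M'$, the group $\widetilde{\Orth}^+(M)$ fixes $M'/M\subseteq M^\vee/M$ pointwise and hence lands inside $\Orth^+(M')$ as a finite-index subgroup, and one then passes from this subgroup to the full group $\Orth^+(M')$ by the finite-index-independence mechanism that the present paper also recalls in \S 2 (``as in \cite[Lemma 2.2]{Ma17}''), which your symmetrised product $\tilde F=\prod_i F(g_i\mathcal Z)$ makes explicit while keeping track of the $2$-reflectivity of the divisor.

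The one step you under-justify is the finiteness of $[\Orth^+(M'):\widetilde{\Orth}^+(M)]$. Your parenthetical reason (``they are the kernels of the actions on the respective finite discriminant forms'') yields $[\Orth^+(M):\widetilde{\Orth}^+(M)]<\infty$ and $[\Orth^+(M'):\widetilde{\Orth}^+(M')]<\infty$, but together with the inclusion $\widetilde{\Orth}^+(M)\subseteq\widetilde{\Orth}^+(M')$ this does not formally give $[\widetilde{\Orth}^+(M'):\widetilde{\Orth}^+(M)]<\infty$: elements of $\widetilde{\Orth}^+(M')$ need not preserve $M$ at all (take $M'$ unimodular, so that $\widetilde{\Orth}^+(M')=\Orth^+(M')$, which does not stabilise a given sublattice), so the two kernels are kernels of actions of \emph{different} groups and cannot be compared index-wise without a further argument. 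The gap is easily repaired by the standard commensurability argument: with $d=[M':M]$ one has $dM'\subseteq M\subseteq M'$, so the principal congruence subgroup $\Gamma(d)=\{g\in\Orth^+(M'): g\equiv\mathrm{id} \text{ on } M'/dM'\}$ preserves $M$ (for $v\in M$, $g(v)-v\in dM'\subseteq M$) and has finite index in $\Orth^+(M')$; intersecting $\Gamma(d)$ with the kernel of its action on $M^\vee/M$ gives a finite-index subgroup of $\Orth^+(M')$ contained in $\widetilde{\Orth}^+(M)$, whence $\widetilde{\Orth}^+(M)$ itself has finite index in $\Orth^+(M')$. With this patch your proof is complete.
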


\begin{lemma}\label{Lem:reduction2p}
Let $M$ and $N$ be two lattices of signature $(n,2)$ and prime level $p$.
\begin{enumerate}
\item The lattice $M$ is $2p$-reflective if and only if $M^\vee(p)$ is $2$-reflective. 
\item Assume that $M$ is an overlattice of $N$. If $M$ is $2p$-reflective then $N$ is also $2p$-reflective.
\end{enumerate}
\end{lemma}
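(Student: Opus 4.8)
The plan is to treat both parts as consequences of a single principle: passing from $M$ to $M^\vee(p)$ is an involution on lattices of signature $(n,2)$ and level dividing $p$ that preserves $\Orth^+$ and the domain $\cD$ while \emph{interchanging} the notions of $2$-reflectivity and $2p$-reflectivity. Part (1) makes this interchange precise at the level of modular forms, and part (2) then follows by reducing to the overlattice statement for $2$-reflective forms already recorded in Lemma \ref{Lem:reductionMa}.

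For part (1), I would first note that $(M^\vee(p))^\vee(p)\cong M$ (on Gram matrices this is $p\,(p\,G^{-1})^{-1}=G$), so the operation is indeed an involution, and that $\Orth^+(M)=\Orth^+(M^\vee(p))$ with $\cD(M)=\cD(M^\vee(p))$ because scaling the form by the positive number $p$ changes neither the isotropy conditions defining $\cD$ nor the orthogonality of vectors. Consequently a reflective modular form $F$ for $\Orth^+(M)$ is literally a reflective modular form for $\Orth^+(M^\vee(p))$, with the \emph{same} collection of rational quadratic divisors $\gamma^\perp$. The only remaining point is to match the reflective vectors: if $v\in M$ is $2p$-reflective, i.e.\ $(v,v)=2p$ and $v/p\in M^\vee$, then $w:=v/p$ lies in $M^\vee(p)$, has $M^\vee(p)$-norm $p\cdot(v/p,v/p)_{M^\vee}=2$, and satisfies $w^\perp=v^\perp$, so $w$ is $2$-reflective in $M^\vee(p)$; conversely, if $w\in M^\vee(p)$ has norm $2$ then $v:=pw\in pM^\vee\subseteq M$ (using level $p$) has $M$-norm $2p$ with $v/p=w\in M^\vee$, so $v$ is $2p$-reflective in $M$. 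Thus a purely $2p$-reflective form on $M$ is exactly a purely $2$-reflective form on $M^\vee(p)$, which gives the asserted equivalence.

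For part (2), an overlattice $N\subseteq M$ of finite index dualizes to $M^\vee\subseteq N^\vee$ of the same index, and scaling by $p$ yields $M^\vee(p)\subseteq N^\vee(p)$, so $N^\vee(p)$ is an even overlattice of $M^\vee(p)$. Now I would chain the three facts: $M$ is $2p$-reflective $\Rightarrow$ (part (1)) $M^\vee(p)$ is $2$-reflective $\Rightarrow$ (Lemma \ref{Lem:reductionMa}) the even overlattice $N^\vee(p)$ is $2$-reflective $\Rightarrow$ (part (1) applied to $N$) $N$ is $2p$-reflective.

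Each step is short, so I do not expect a genuine obstacle; the point requiring the most care is bookkeeping — verifying that $M^\vee(p)$ and $N^\vee(p)$ are genuinely \emph{even} lattices of level dividing $p$, so that both the involution identity and the hypothesis of Lemma \ref{Lem:reductionMa} apply, and that the overlattice relation reverses correctly under dualization. Both follow from the standing assumption that $M$ and $N$ have prime level $p$, as already recorded at the start of \S\ref{Sec:basic lemmas}.
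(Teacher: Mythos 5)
Your proposal is correct and follows essentially the same route as the paper: the paper's proof consists precisely of the observation that $\Orth^+(M)=\Orth^+(M^\vee(p))$ with the same domain, that reflective vectors of norm $2p$ in $M$ correspond to vectors of norm $2$ in $M^\vee(p)$ (and conversely, using $pM^\vee\subseteq M$ for prime level), and then deducing part (2) by applying Lemma \ref{Lem:reductionMa} to the overlattice $M^\vee(p)\subseteq N^\vee(p)$ exactly as in your chain. Your write-up just makes explicit the bookkeeping (the involution $(M^\vee(p))^\vee(p)\cong M$ and the evenness of the rescaled duals) that the paper leaves to the reader.
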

\begin{proof}
Let $F$ be a symmetric reflective modular form on $M$ with $c_1=a$ and $c_p=b$. Under the identification \eqref{eq:O}, $F$ defines a symmetric reflective modular form on $M^\vee(p)$ with $c_1=b$ and $c_p=a$. This implies the first assertion of the lemma. The second assertion follows from the first assertion by applying Lemma \ref{Lem:reductionMa} to $M^\vee(p) < N^\vee(p)$. 
\end{proof}

The following result proved by Scheithauer gives a bound on the signature of $2p$-reflective Borcherds products.

\begin{lemma}[Proposition 6.1 in \cite{Sch17}]\label{Lem:Sch2p}
Let $M$ be a lattice of signature $(n,2)$ and prime level $p$.
If $M$ has a $2p$-reflective modular form which can be constructed as a Borcherds product of a vector-valued modular form associated to $M^\vee/M$, then $n\leq 2+24/(p+1)$.
\end{lemma}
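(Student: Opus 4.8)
The plan is to use that a $2p$-reflective modular form of Borcherds type is the lift of an explicit vector-valued modular form, and to extract the bound from the constant-term identity obtained by pairing this input against an Eisenstein series. Let $\rho$ denote the Weil representation attached to the discriminant form $M^\vee/M\cong(\ZZ/p\ZZ)^{n_p}$. By Borcherds' theory the given product $\Psi$ is the lift of a weakly holomorphic modular form
\[
F=\sum_{\gamma\in M^\vee/M}\sum_{m}c(\gamma,m)\,q^m e_\gamma
\]
of weight $1-n/2$, and the weight of $\Psi$ equals $c(0,0)/2$. Because $\Psi$ is a genuine holomorphic modular form its principal part is effective, so $c(\gamma,m)\ge 0$ for $m<0$; because $\Psi$ is $2p$-reflective this principal part is supported exactly on the classes $\gamma$ of order $p$ with $q_M(\gamma)\equiv 1/p\pmod{\ZZ}$ that index the $2p$-reflective divisors, at the single index $m=-1/p$ (a class-$0$ pole $c(0,-1)$ would produce a $2$-reflective divisor, and deeper poles would produce non-reflective ones, so all vanish). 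By the Eichler criterion recalled above these classes form a single orbit, so $c(\gamma,-1/p)=c_p$ is one positive integer; since the divisor is non-trivial, $\Psi$ is non-constant and $c(0,0)=2\,(\text{weight of }\Psi)>0$.

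First I would set up the obstruction identity. Pairing $F$ with the Eisenstein series $E=\sum b(\gamma,m)q^m e_\gamma$ of weight $1+n/2$ for the dual representation (normalized so that $b(0,0)=1$, with $b(\gamma,m)=0$ for $m<0$) produces a scalar weakly holomorphic modular form $\langle F,E\rangle$ of weight $2$ on $\SL_2(\ZZ)$, holomorphic on the upper half-plane with a pole only at the cusp. The residue theorem on $X(1)$ forces its constant term to vanish, which, following Borcherds and Bruinier, reads
\[
c(0,0)+c_p\sum_{\gamma}b(\gamma,1/p)=0 ,
\]
the sum running over the order-$p$ classes above. Since $c(0,0)>0$ and $c_p>0$, this forces $\sum_\gamma b(\gamma,1/p)<0$; as all these coefficients share one value $b_0$ by the symmetry of the discriminant form, the entire statement reduces to the single inequality $b_0<0$.

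Second, I would compute $b_0$ explicitly. Using the closed formula of Bruinier--Kuss for Fourier coefficients of Eisenstein series attached to a Weil representation, specialized to prime level by Scheithauer, one finds that $b_0$ is a product of a positive normalizing constant, the factor $-2\kappa/B_\kappa$ with $\kappa=1+n/2$, and a local factor at $p$ governed by the rank-$n_p$ quadratic form over $\ZZ/p\ZZ$. Tracking the signs of these factors, the inequality $b_0<0$ becomes the arithmetic condition $(\kappa-2)(p+1)\le 12$, that is $n\le 2+24/(p+1)$, which is exactly the asserted bound.

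The main obstacle is precisely this last evaluation: determining the sign of the Eisenstein coefficient $b_0$ at an anisotropic order-$p$ class and turning $b_0<0$ into the clean threshold $(\,1+n/2-2\,)(p+1)\le 12$. Everything preceding it is formal, and all of the arithmetic content sits in the prime-level local density computation, where the universal constant $24$ (through $B_2$ and the residue at weight $2$) and the index $p+1$ of $\Gamma_0(p)$ in $\SL_2(\ZZ)$ enter. I would expect the half-integral-weight cases ($n$ odd) to require slightly more care in normalizing $E$, but to yield the same final inequality.
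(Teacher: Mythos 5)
Your first two steps are sound but they only reproduce a known identity, and the step that was supposed to carry all the content is wrong. The constant-term identity $c(0,0)+c_p\sum_\gamma b(\gamma,1/p)=0$ obtained by pairing the input $F$ with the obstruction Eisenstein series is nothing other than the weight formula for the Borcherds product, and it is consistent for \emph{every} $n$ and $p$: by the Bruinier--Kuss computation, the coefficients $b(\gamma,m)$, $m>0$, of the Eisenstein series of weight $\kappa=1+n/2$ for the dual Weil representation of a signature $(n,2)$ lattice are non-positive in all cases --- this is precisely what makes the weight $c(0,0)/2$ of a holomorphic product with effective principal part non-negative. So the condition $b_0<0$, to which you reduce the whole statement, holds unconditionally and encodes no restriction on $n$. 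It also cannot be equivalent to the size condition $(\kappa-2)(p+1)\le 12$: a sign condition on a product of Bernoulli and local factors is (roughly) periodic in $\kappa$, not a threshold condition, and the sign does not even track $-2\kappa/B_\kappa$ as you claim. For a concrete check inside this very paper: the $2$-reflective form of weight $72$ on $2U\oplus D_4$ ($n=6$, $\kappa=4$) has input with principal part $q^{-1}\mathbf{e}_0$, so the same constant-term identity forces $b(0,1)=-144<0$, even though $-2\kappa/B_\kappa=240>0$; the Gauss-sum and local factors always restore the negative sign. Hence the Eisenstein pairing can never produce the bound, no matter how carefully the local densities are evaluated.

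The actual mechanism is the valence (Riemann--Roch) inequality on $X_0(p)$, which is exactly how this paper proves the companion Lemma \ref{Lem:Dit} (the statement under discussion itself is only quoted from Scheithauer, not reproved). One passes from $F$ to a scalar form on $\Gamma_0(p)$ --- for instance its component $F_0$, which has weight $1-n/2$ and a quadratic character. Because the form is $2p$-reflective there is no $q^{-1}\mathbf{e}_0$ term, so $F_0$ is holomorphic at $\infty$ with constant term $c(0,0)=2\operatorname{wt}(\Psi)>0$, in particular $F_0\not\equiv 0$; at the cusp $0$ the $S$-transform mixes in the components $F_\gamma$, whose worst singularity is $c_pq^{-1/p}$, so $F_0$ has a pole of order at most $1$ in the width-$p$ local parameter there. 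Since $F_0$ is holomorphic of negative weight on $\HH$, the valence formula gives
\begin{equation*}
0-1\;\le\;\frac{p+1}{12}\left(1-\frac{n}{2}\right),
\end{equation*}
which is $n\le 2+24/(p+1)$. Allowing in addition a pole $q^{-1}\mathbf{e}_0$ (a general reflective form, with $2$-divisors) replaces the left-hand side by $-2$ and yields the weaker bound $48/(p+1)$ of Lemma \ref{Lem:Dit}. Your intuition that the index $p+1=[\SL_2(\ZZ):\Gamma_0(p)]$ and the constant $24=2\cdot 12$ must enter is correct, but they enter through this valence inequality applied to a scalar reduction of $F$, not through the sign of an Eisenstein coefficient.
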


We introduce a special case of Dittmann's result. We repeat its proof, because not only the result is important to us, but also we can derive one construction of reflective modular forms (see Lemma \ref{rem:Dit} below) from the proof. 

\begin{lemma}[Lemma 4.5 in \cite{Dit18}]\label{Lem:Dit}
Let $M$ be a reflective lattice of signature $(n,2)$ and prime level $p$. If $M$ can be expressed as $U\oplus U(p)\oplus L$, then $n\leq 2+48/(p+1)$. 
\end{lemma}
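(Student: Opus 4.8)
The plan is to realise every reflective modular form on $M$ as a Borcherds product, to read off its weight from the Eisenstein part of the input, and then to force the bound on $n$ from an explicit local computation at $p$. This follows the strategy behind Lemma~\ref{Lem:Sch2p}, but keeps track of the extra summand $U(p)$.

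First I would produce the Borcherds input. Using the index-$p$ embedding $U(p)\hookrightarrow U$, one sees $M$ inside the even overlattice $\tilde M=2U\oplus L$, which splits two hyperbolic planes; after replacing the given form by one for $\widetilde{\Orth}^+(M)$ (allowed up to finite index, as recalled above), Bruinier's converse theorem—or its extension to lattices splitting a single hyperbolic plane, cf. \cite{Bru14}—expresses the form as the Borcherds lift of a weakly holomorphic modular form $F=\sum_\gamma f_\gamma\mathfrak e_\gamma$ of weight $1-n/2$ for the Weil representation attached to $M^\vee/M$. Reflectivity says that the only poles of $F$ lie on reflective classes: $f_0$ carries $c_1 q^{-1}$, coming from the norm-$2$ vectors, and the order-$p$ classes $\mu$ with $q(\mu)=1/p$ carry $c_p q^{-1/p}$, coming from the $2p$-reflective vectors $v=p\mu$, with $c_1,c_p\ge 0$ and $(c_1,c_p)\ne(0,0)$. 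The duality $M\leftrightarrow M^\vee(p)$ of Lemma~\ref{Lem:reduction2p} lets me treat the $2$-reflective and $2p$-reflective contributions symmetrically, since each becomes the other on the dual lattice, which is again of the form $U\oplus U(p)\oplus(\text{positive definite})$.

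Next I would extract the weight. Pairing $F$ with the weight-$(1+n/2)$ Eisenstein series $E$ attached to $\mathfrak e_0$ for the dual representation, the constant coefficient of the weakly holomorphic weight-$2$ scalar form $\langle F,E\rangle$ vanishes by the residue theorem, which gives
\[
2k=c_0(0)=-c_1\,b^E_0(1)-c_p\!\!\sum_{q(\mu)=1/p}\!\!b^E_\mu(1/p),
\]
where $k>0$ is the weight of the product. Everything is now reduced to the two Eisenstein coefficients $b^E_0(1)$ and $b^E_\mu(1/p)$, which I would evaluate by the Bruinier--Kuss formula. Since $M^\vee/M$ is $p$-elementary of rank $n_p$, each coefficient is an archimedean factor $-2(1+n/2)/B_{1+n/2}$ times a product of local densities, and the only nontrivial local factor occurs at $p$; it is precisely here that the summand $U(p)$, rather than a second copy of $U$, enters and contributes the denominator $p+1$.

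The decisive and hardest step is this local computation at $p$ together with its conversion into a sharp inequality. Splitting off $U(p)$ in place of a second hyperbolic plane multiplies the relevant $p$-local factor by essentially $2$ compared with the situation of Lemma~\ref{Lem:Sch2p}, so that the constraints $k>0$, $c_1,c_p\ge 0$, and the effectivity and exactness of the divisor (the absence of any further principal part of $F$) combine to yield $n\le 2+48/(p+1)$ rather than Scheithauer's $n\le 2+24/(p+1)$. The remaining work is to carry out the density computation for the genus $\II_{n,2}(p^{\epsilon_p n_p})$, to use Lemmas~\ref{Lem:reductionMa}--\ref{Lem:reduction2p} so that the two types of reflective divisor are handled on the same footing, and to check that the borderline genera are exactly the ones surviving in Theorem~\ref{th:main}; turning the local factor into the clean threshold $48/(p+1)$ is the technical heart of the argument.
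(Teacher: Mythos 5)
Your opening moves match the paper: by \cite{Bru14} any reflective form on $M=U\oplus U(p)\oplus L$ is a Borcherds product, and reflectivity pins down the principal part of the input as $c_1q^{-1}\mathfrak{e}_0+c_p\sum_{q(\mu)=1/p}q^{-1/p}\mathfrak{e}_\mu$. The gap is exactly at what you call the ``decisive and hardest step''. Pairing the input with the weight-$(1+n/2)$ Eisenstein series only computes the weight of the product; since the coefficients $b^E_\gamma(m)$, $m>0$, of that Eisenstein series are all nonpositive, the quantity $2k=-c_1b^E_0(1)-c_p\sum_\mu b^E_\mu(1/p)$ is automatically positive for every effective principal part and for every $n$. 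Hence the constraints you invoke ($k>0$, $c_1,c_p\ge 0$, no further principal part) hold identically and impose no bound on $n$ whatsoever; no refinement of the local density computation at $p$ can change this, because you are evaluating $k$, not testing an obstruction to the existence of the input form. What actually forces the bound is an existence constraint: a nearly holomorphic form of the very negative weight $1-n/2$ whose poles have bounded order cannot exist for large $n$. The paper gets this through two steps absent from your proposal: (i) by \cite[Corollary 5.5]{Sch15}, the ($\Orth^+(M)$-invariant) vector-valued input is the lifting of a scalar nearly holomorphic form $f$ of weight $1-n/2$ on $\Gamma_0(p)$; (ii) reflectivity, combined with the nontrivial isotropic elements of $M^\vee/M$ supplied by the summand $U(p)$ (via \cite[Theorem 6.2]{Sch06}), forces $f=aq^{-1}+b+O(q)$ at $\infty$ and $f|_S=cq^{-1/p}+d+O(q^{1/p})$ at the cusp $0$. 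The valence (Riemann--Roch) formula on $\Gamma_0(p)$, whose index in $\SL_2(\ZZ)$ is $p+1$, then yields
\begin{equation*}
-2\le p\nu_0(f)+\nu_\infty(f)\le \frac{p+1}{12}\Bigl(1-\frac{n}{2}\Bigr),
\end{equation*}
which is the inequality $n\le 2+48/(p+1)$.

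This also corrects your explanation of the constant $48$: it has nothing to do with the $p$-local density of $U(p)$ versus a second copy of $U$. It comes from the two cusps of $\Gamma_0(p)$: a general reflective form may have poles at both cusps (the $2$-reflective divisors feed the pole at $\infty$, the $2p$-reflective ones the pole at $0$), each contributing at most $1$ to the valence count, hence the $-2$ above; Lemma \ref{Lem:Sch2p} concerns $2p$-reflective forms only, i.e.\ a single pole, hence $-1$ and the bound $24/(p+1)$. Finally, your detour through the overlattice $\tilde M=2U\oplus L$ is both unnecessary (\cite{Bru14} applies to $U\oplus U(p)\oplus L$ directly) and delicate: a $2p$-reflective vector of $M$ need not remain reflective in an overlattice (Lemma \ref{Lem:reduction2p} only passes $2p$-reflectivity \emph{down} to sublattices), and the given form need not be modular for $\widetilde{\Orth}^+(\tilde M)$.
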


\begin{proof}
Suppose that $F$ is a reflective modular form for $\Orth^+(M)$. Since $M=U\oplus U(p)\oplus L$, we derive from \cite{Bru14} that $F$ is a Borcherds product. By \cite[Corollary 5.5]{Sch15}, the corresponding vector-valued modular form can be constructed as a lifting of some modular form for $\Gamma_0(p)$. More precisely, there exists a nearly holomorphic modular form $f$ of weight $1-n/2$ for $\Gamma_0(p)$ with a character. We write $f=aq^{-1}+b+O(q)$. We see from the expression of $M$ that there are nontrivial vectors of norm $0$ in the discriminant group of $M$. By \cite[Theorem 6.2]{Sch06}, we conclude that $f\vert_S=c q^{-1/p}+d + O(q^{1/p})$, otherwise there will be some principal Fourier coefficients which give non-reflective divisors. Here $a$, $b$, $c$, $d$ are constants. The Riemann--Roch theorem applied to $f$ gives
$$
-2\leq p\nu_0(f)+\nu_\infty(f) \leq \frac{p+1}{12}\left(1-\frac{n}{2}\right),
$$
which proves the lemma.
\end{proof}

\begin{lemma}\label{rem:Dit}
Let $M=U\oplus U(p)\oplus L$ and $M_1=2U\oplus L$. Assume that $F$ is a reflective modular form for $\Orth^+(M)$ with multiplicities $c_1\neq 0$ and $c_p$.  Then $M_1$ has a symmetric reflective modular form whose two types of multiplicities are respectively $c_1$ and $p c_p$.
\end{lemma}
\begin{proof}
By the proof of Lemma \ref{Lem:Dit}, the modular form $F$ is the Borcherds product of a certain lifting of $f$ for $M^\vee / M$. By calculating the lifting of $f$ for $M_1^\vee/ M_1$ (see \cite[Theorem 6.2]{Sch06} for an explicit definition), we find that its Borcherds product 
gives a reflective modular form for $\Orth^+(M_1)$ with multiplicities $c_1$ and $pc_p$. 
\end{proof}

Using the idea of pull-backs, we prove the following result (see \cite[\S 5]{Wan19}).
\begin{lemma}\label{Lem:pullbacktest}
Let $M$ be an even lattice of signature of $(n,2)$ and $L$ be an even positive definite lattice. If $M\oplus L$ is reflective (resp. $2$-reflective), then $M$ is reflective (resp. $2$-reflective) too.
\end{lemma}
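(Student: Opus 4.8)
The plan is to realise $M$ inside $\tilde M := M \oplus L$ as an orthogonal summand and to transport a reflective form on $\tilde M$ down to $M$ by the quasi pull-back of \cite{GHS07}. Writing $m = \rank L$, positive-definiteness of $L$ means $\tilde M$ has signature $(n+m,2)$, and the primitive embedding $M \emb \tilde M$ realises $\cD(M) = \cD(\tilde M)\cap \PP(M\otimes\CC)$ as a sub-domain of codimension $m$. By the remarks preceding the lemma it suffices to take $F$ a reflective modular form for $\Orth^+(\tilde M)$, and I would produce from it a nonzero reflective modular form $F|_M$ on $\cD(M)$; this shows $M$ is reflective, and the same argument will show that $2$-reflectivity is inherited.

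First I would build $F|_M$ by a chain of rank-one reductions, to avoid the $0/0$ phenomenon that arises if one divides by the linear forms cutting out a higher-codimension sub-domain. Choose $v_1,\dots,v_m \in L$ pairwise orthogonal and spanning $L\otimes\QQ$, and set $W_i = \{v_1,\dots,v_i\}^\perp \cap \tilde M$, so that $W_0 = \tilde M$, each $W_i$ has signature $(n+m-i,2)$, and $W_m = M$ (using primitivity of $M$ in $\tilde M$). Each $v_{i+1}$ has positive norm, so $\cD(W_{i+1}) = v_{i+1}^\perp\cap\cD(W_i)$ is a genuine divisor of the type IV domain $\cD(W_i)$. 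At the $(i+1)$-st step I pass from a form $F_i$ on $\cD(W_i)$ to the form on $\cD(W_{i+1})$ obtained by dividing $F_i$ by $(\,\cdot\,,v_{i+1})^{N_i}$, where $N_i$ is the multiplicity of the irreducible divisor $\cD(W_{i+1})$ in the zero divisor of $F_i$, and then restricting. Since $(\,\cdot\,,v_{i+1})$ vanishes to order exactly one along $\cD(W_{i+1})$ while $F_i$ vanishes there to order exactly $N_i$, the restriction is a nonzero holomorphic modular form, of a new weight and character. Iterating gives a nonzero $F|_M$ on $\cD(M)$; because each $v_i$ lies in $L$, the whole chain is equivariant under $\hat g := (g,\id_L)$ for $g\in\Orth^+(M)$, and as $\id_L$ is an orientation-preserving isometry of the positive-definite factor we have $\hat g\in\Orth^+(\tilde M)$, so $F|_M$ is automorphic for all of $\Orth^+(M)$.

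It then remains to identify the zero divisor of $F|_M$. A component $\gamma^\perp$ of the divisor of $F$, with $\gamma\in\tilde M$ a primitive root, restricts to $\gamma_M^\perp\cap\cD(M)$, where $\gamma = \gamma_M+\gamma_L$ is the orthogonal decomposition with $\gamma_M\in M$ and $\gamma_L\in L$; indeed $(\mathcal{Z},\gamma)=(\mathcal{Z},\gamma_M)$ for $\mathcal{Z}\in M\otimes\CC$. This survives only when $\gamma_M\neq 0$ and $(\gamma_M,\gamma_M)>0$, and the key identity is $(\gamma_M,\gamma_M)=(\gamma,\gamma)-(\gamma_L,\gamma_L)$ with $(\gamma_L,\gamma_L)\geq 0$ since $L$ is positive definite. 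If $\gamma_L = 0$ then $\gamma=\gamma_M\in M$ is already a root and $\gamma_M^\perp$ is reflective. If $\gamma_L\neq 0$, let $a$ be the largest integer with $\gamma/a\in\tilde M^\vee$; then $\gamma_M/a\in M^\vee$ and $\gamma_L/a\in L^\vee$, a standard computation gives $(\gamma,\gamma)\in\{a,2a\}$, and $L$ even forces $(\gamma_L,\gamma_L)$ to be an even positive multiple of $a$, hence $(\gamma_L,\gamma_L)\geq a$. A short case check then yields either $(\gamma_M,\gamma_M)\leq 0$, so the component disappears, or $(\gamma,\gamma)=2a$ with $(\gamma_L,\gamma_L)=a$, whence $(\gamma_M,\gamma_M)=a$ and $\gamma_M/a\in M^\vee$ force $\gamma_M$ to be a root of $M$. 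Thus every surviving $\gamma_M^\perp$ is reflective, so $F|_M$ is reflective. When $F$ is $2$-reflective all roots $\gamma$ have norm $2$, the mixed case always gives $(\gamma_M,\gamma_M)\leq 0$, and only norm-$2$ vectors of $M$ survive, so $F|_M$ is $2$-reflective.

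The main obstacle is the nonvanishing and modularity of the quasi pull-back in codimension $m\geq 2$: one cannot simply divide $F$ by the product of the linear forms cutting out $\cD(M)$, as each such form vanishes identically there. Organising the reduction as a chain of codimension-one steps, at each of which the order of vanishing of the numerator is matched exactly by a power of a single reduced linear equation, is what guarantees a nonzero limit. Positive-definiteness of $L$ is used twice: once to make every $v_i$ of positive norm (so the intermediate sub-domains are genuine type IV domains), and once, through $(\gamma_L,\gamma_L)\geq 0$, to force the surviving divisors to stay reflective.
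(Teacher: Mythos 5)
Your proposal is correct and takes essentially the same route as the paper, which obtains the form on $M$ as the quasi pull-back along $M\hookrightarrow M\oplus L$ (citing \cite[Theorem 6.1]{Gri18} and \cite[\S 5]{Wan19}) and, just as you do, uses the orthogonal decomposition $\gamma=\gamma_M+\gamma_L$ together with positive definiteness and evenness of $L$ to see that the surviving divisors remain reflective. Your only deviation is organizational: you realize the quasi pull-back by iterated codimension-one reductions along an orthogonal spanning set of $L$, matching the order of vanishing at each step, rather than dividing once by the product of linear forms attached to the roots of the divisor of $F$ that lie in $L$; this is a legitimate variant of the same construction.
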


In the lemma above, the reflective modular form for $M$ is constructed as the quasi pull-back $M \hookrightarrow M\oplus L$. The pull-back trick was first used in \cite{BKP98}.  We refer to \cite[Theorem 6.1]{Gri18} for a detailed description. Let $F$ be a reflective modular form of weight $k$ for $M\oplus L$. Then the weight of the quasi pull-back is given by $k$ plus one half of the number of divisors of $F$ contained in $L$ (counting multiplicity).

\section{The Jacobi forms approach}\label{Sec:Jacobi forms approach}
In \cite{Wan18, Wan19}, we developed a new approach based on the theory of Jacobi forms to classify reflective modular forms. We describe this approach in the case of prime level. We refer to \cite{EZ85, Gri18} for the theory of Jacobi forms.

\begin{proposition}\label{Prop:Jacobi}
Let $M=2U\oplus L$ be a lattice of prime level $p$ and $F$ be a reflective modular form of weight $k$ for $\Orth^+(M)$. Assume that the $2$-reflective and $2p$-reflective divisors of $F$ have multiplicities $c_1$ and $c_p$, respectively. We define the root system associated to $L$ as $R(L)=R_1(L)\cup R_2(L)$, where
$$
R_1(L)=\{ v\in L: (v,v)=2 \}, \quad R_2(L)=\{ v\in L: (v,v)=2p, v/p \in L^\vee \}.
$$
\begin{enumerate}
\item If $R(L)$ is empty, then $k=12c_1$.

\item If $R(L)$ is non-empty, then $R(L)$ generates $L\otimes \RR$ and thus it is an usual root system of rank equal to $\rank(L)$.  Moreover, the following identities hold
\begin{equation}\label{eq:one} 
\begin{split}
C:=&\frac{1}{24}\left(c_1 \abs{R_1(L)}+ c_p \abs{R_2(L)}+ 2k \right)- c_1 \\
= &\frac{1}{2\rank(L)}\left(2c_1 \abs{R_1(L)}+ \frac{2}{p}c_p \abs{R_2(L)} \right),
\end{split}
\end{equation}
\begin{equation}\label{eq:two} 
c_1 \sum_{r\in R_1(L)} (r,\mathfrak{z})^2 + c_p\sum_{s\in R_2(L)} (s/p,\mathfrak{z})^2 = 2C (\mathfrak{z},\mathfrak{z}), \quad \mathfrak{z}\in L\otimes \CC.
\end{equation}

\item When $p\geq 5$, as lattices, we have $R_1(L)\cap R_2(L) =\emptyset$, i.e. $R(L)=R_1(L)\oplus R_2(L)$. The set $R_1(L)$ is a direct sum of some $ADE$-type root systems and all irreducible components have the same Coxeter number denoted by $h_1$. The set $R_2(L)$ can be represented as the $p$-rescaling $R_2(p)$, where $R_2$ is a direct sum of some $ADE$-type root systems and all irreducible components have the same Coxeter number denoted by $h_2$. Let $n_1$ be the rank of $R_1(L)$. If $R_1(L)\neq \emptyset$ and $R_2(L)\neq \emptyset$, then $c_p\neq 0$ and we have 
\begin{align*}
C&=c_1h_1 = \frac{c_p h_2}{p},\\
k&= c_1\left[ 12(h_1+1)+\frac{1}{2}\left( p -1 \right)n_1h_1 - \frac{1}{2}\rank(L) ph_1   \right].
\end{align*}
Moreover, we have 
\begin{equation}\label{eq:singular}
k\geq \frac{1}{2}\left[ n_1 c_1 + (\rank(L)-n_1)c_p \right].
\end{equation}
\end{enumerate}
\end{proposition}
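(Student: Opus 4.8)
The plan is to read off the entire structure of $F$ from its first Fourier--Jacobi coefficient. Since $M=2U\oplus L$ splits two hyperbolic planes, Bruinier's converse theorem shows $F$ is a Borcherds product, and fixing one copy of $U$ gives a Fourier--Jacobi expansion $F=\sum_{m\geq m_0}\phi_m(\tau,\mathfrak z)q^m$ in the cusp variable. I would take $\phi:=\phi_{m_0}$, the first non-zero coefficient; it is a holomorphic Jacobi form of weight $k$ and some index $t$ for $L$. The decisive input is that, because $F$ is reflective, the zero divisor of $\phi$ in the abelian variable $\mathfrak z\in L\otimes\CC$ is supported exactly on the reflective hyperplanes $(r,\mathfrak z)\in\ZZ\tau+\ZZ$, with multiplicity $c_1$ along the walls from $R_1(L)$ and $c_p$ along those from $R_2(L)$.

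First I would identify $\phi$ as a theta block. The product $\prod_{r}\vartheta(\tau,(r,\mathfrak z))^{c_r}$ over a choice of positive roots has precisely this zero divisor, so the quotient of $\phi$ by it is a holomorphic Jacobi form with no zeros or poles in $\mathfrak z$, hence of index $0$, hence a function of $\tau$ alone; matching multiplier systems forces it to be a power $\eta^{f}$. Thus $\phi=\eta^{f}\prod_{r\in R_1^+}\vartheta(\tau,(r,\mathfrak z))^{c_1}\prod_{s\in R_2^+}\vartheta(\tau,(s,\mathfrak z))^{c_p}$. Comparing the index quadratic form $\tfrac12\sum_r c_r(r,\mathfrak z)^2$ with $t\cdot(\mathfrak z,\mathfrak z)$ gives identity \eqref{eq:two} with $C=t$; since $L$ is positive definite this forces $C>0$, hence the roots span $L\otimes\RR$, proving the root-system claims in part (2). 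The weight of the theta block and its $q$-order are both explicit in $f$ and the number of theta factors; eliminating $f$ and invoking the $\Orth^+(M)$-symmetry exchanging the two hyperbolic planes (which ties the $q$-order of $\phi$ to its index $C$) gives the first equality in \eqref{eq:one}. Specializing to $R(L)=\emptyset$, so $\phi=\eta^{f}$ has index $0$, this reads $k=12c_1$, which is part (1). Taking the trace of \eqref{eq:two} over $L$ gives the second equality in \eqref{eq:one}.

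For part (3) I would first rule out mixed-length roots. By part (2), $R(L)=R_1(L)\cup R_2(L)$ is a crystallographic root system, and a norm-$2$ root and a norm-$2p$ root can lie in a common irreducible component only when the squared-length ratio $p$ equals $2$ or $3$, i.e. $p\in\{2,3\}$. Hence for $p\geq5$ we get $R_1(L)\perp R_2(L)$ and $R(L)=R_1(L)\oplus R_2(L)$, each summand being simply-laced, i.e. of $ADE$ type ($R_2(L)$ as the $p$-rescaling of such). Then I would restrict \eqref{eq:two} to the span of a single irreducible component and use the standard identity $\sum_{r\in R}(r,\mathfrak z)^2=2h(\mathfrak z,\mathfrak z)$ for a norm-$2$ root system of Coxeter number $h$: this forces all components of $R_1(L)$ to share one Coxeter number $h_1$ with $C=c_1h_1$, and all components of $R_2(L)$ to share $h_2$ with $C=c_ph_2/p$ (the factor $1/p$ reflecting the $p$-rescaling of $R_2(L)$). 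Solving \eqref{eq:one} for $k$ and substituting $\abs{R_1(L)}=n_1h_1$, $\abs{R_2(L)}=(\rank(L)-n_1)h_2$ and $C=c_1h_1=c_ph_2/p$ yields the stated weight formula after simplification. Finally, inequality \eqref{eq:singular} is a holomorphy (valence) bound: a non-zero holomorphic Jacobi form of weight $k$ cannot vanish along a maximal orthogonal family of reflective walls to total order exceeding $2k$, and counting $n_1$ walls of multiplicity $c_1$ and $\rank(L)-n_1$ walls of multiplicity $c_p$ gives $n_1c_1+(\rank(L)-n_1)c_p\leq 2k$.

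The main obstacle is the normalisation carried out in the first two paragraphs: proving rigorously that $\phi$ is holomorphic with \emph{exactly} the asserted divisor and multiplier system, so that the theta-block identification is forced, and pinning down the precise relation between the $q$-order of $\phi$ and its index that produces the $-c_1$ correction in \eqref{eq:one}. This term encodes the contribution of the $2$-reflective roots meeting the one-dimensional cusp, and getting its size and sign right---rather than the subsequent root-system bookkeeping, which is routine once \eqref{eq:one} and \eqref{eq:two} are in hand---is where the real work lies.
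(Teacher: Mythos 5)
Your strategy---read everything off the leading Fourier--Jacobi coefficient of $F$, identified as a theta block---is the right circle of ideas and is dual to the paper's argument (the paper works with the weight-$0$ weakly holomorphic Jacobi form $\phi$ that is the \emph{input} of the Borcherds lift, and gets \eqref{eq:one} and \eqref{eq:two} at once from the known identities for the $q^0$-term of such a form, \cite[Proposition 2.6]{Gri18}). But the quantitative pillars of the proposition are exactly the steps your proposal leaves open, so there are genuine gaps. First, the theta-block identification requires that the divisor of $\phi_{m_0}$ be \emph{exactly} the reflective walls with multiplicities $c_1$ and $c_p$. Reflectivity of $F$ only yields that every Fourier--Jacobi coefficient vanishes to order \emph{at least} $c_1$ (resp.\ $c_p$) on the vertical walls $\{(r,\mathfrak{z})\in\ZZ\tau+\ZZ\}$; it gives neither exactness of the multiplicity for the leading coefficient nor the absence of further zeros, and without this your quotient-by-thetas argument cannot conclude that the quotient has index $0$. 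That exactness is precisely the content of the theorem on Fourier--Jacobi expansions of Borcherds products (\cite[Theorem 4.2]{Gri18}) which the paper cites rather than reproves. Second, the mechanism you propose for \eqref{eq:one} is not merely unfinished but false as stated: for the leading Fourier--Jacobi coefficient of a Borcherds product the $q$-order equals $\frac{1}{24}\left(2k+c_1|R_1(L)|+c_p|R_2(L)|\right)$, while its index equals the right-hand side of \eqref{eq:one}, and identity \eqref{eq:one} says precisely that these two numbers differ by $c_1$. So ``the swap symmetry ties the $q$-order of $\phi$ to its index'' would prove the wrong identity; the symmetry controls the vanishing order of $F$ itself, which is attained at a deeper Fourier--Jacobi coefficient (this is where the $c_1q^{-1}$ principal part enters), not at the leading one. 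Your part (1) inherits the same defect: with $R(L)=\emptyset$ the index is $0$, and a naive ``$q$-order $=$ index'' gives $k=0$, not $k=12c_1$.

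Third, your derivation of \eqref{eq:singular} invokes a ``valence bound''---that a non-zero holomorphic Jacobi form of weight $k$ cannot vanish along orthogonal walls to total order exceeding $2k$---which is not a theorem: zeros of Jacobi forms in the abelian variables are governed by the \emph{index}, not the weight, so this bound is itself the statement needing proof. Even granting it, the count fails: you need $n_1$ pairwise orthogonal roots inside $R_1(L)$, and these need not exist (an irreducible component of type $A_n$, $n\geq 2$, contains no $n$ pairwise orthogonal roots). The paper instead applies the singular-weight theorem---a non-constant holomorphic Jacobi form for a positive definite lattice $K$ has weight at least $\rank(K)/2$---to the theta block, viewed via \cite[Theorem 4.2]{Gri18} and \cite[Theorem 4.6]{Wan19} as a holomorphic Jacobi form whose index lattice has rank $n_1c_1+(\rank(L)-n_1)c_p$; the multiplicities enter through the rank of the index lattice, not through a weight--valence count. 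The root-system bookkeeping in your part (3) (exclusion of mixed components for $p\geq 5$, equal Coxeter numbers, the weight formula) matches the paper and is fine, but the theta-block identification, the $-c_1$ term in \eqref{eq:one}, and \eqref{eq:singular} are the substance of the proposition, and those are the pieces that remain unproved.
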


\begin{proof}
We know from \cite{Bru14} that $F$ is a Borcherds product. In view of the isomorphism between the spaces of vector-valued modular forms and Jacobi forms, there exists a weakly holomorphic Jacobi form $\phi$ of weight $0$ and index $L$ whose Borcherds product gives $F$ (see \cite{Wan19}). The divisors of $F$ of the form $(0,0,x,1,0)^\perp$ determine the $q^0$-term of $\phi$ (see \cite[Theorem 4.2]{Gri18}). More precisely, we have
$$
\phi=c_1q^{-1} + c_1\sum_{r\in R_1(L)} e^{2\pi i(r,\mathfrak{z})} + c_p\sum_{s\in R_2(L)} e^{2\pi i(s/p,\mathfrak{z})} +2k +O(q).
$$
It was proved in \cite[Proposition 2.6]{Gri18} that the $q^0$-term of a Jacobi form of weight $0$ satisfies two relations, which are exactly the two identities of the assertion (2) in our case. When $R(L)=\emptyset$, we have $C=0$ and then $\frac{1}{24}\times 2k - c_1 =0$, which yields $k=12c_1$. When $R(L)\neq\emptyset$, we deduce from \eqref{eq:two} that $R(L)$ generates $L\otimes \RR$, otherwise there will be a nonzero vector orthogonal to $R(L)$ and then $C=0$, which leads to a contradiction. It is easy to check that $R(L)$ is an usual root system by definition. 

We now prove the last assertion. It is well-known that every root system is a direct sum of some irreducible root systems of type $A_n$, $B_n$, $C_n$, $D_n$, $E_6$, $E_7$, $E_8$, $G_2$, $F_4$ and their rescalings (see \cite{Bou60}). But there are only $2$- and $2p$-reflections in $R(L)$. When $p\geq 5$, $R_1(L)$ must be a direct sum of $ADE$-type root systems and $R_2(L)$ must be a direct sum of $p$-rescalings of $ADE$-type root systems. It follows that $R_1(L)\cap R_2(L) =\emptyset$. We conclude from \eqref{eq:two} that the irreducible components of $R_1(L)$ and $R_2(L)$ have the same Coxeter number respectively, and also $C=c_1h_1=c_ph_2/p$. The identity related to weight $k$ follows from \eqref{eq:one}. To prove  inequality \eqref{eq:singular}, we notice that the $q^0$-term of $\phi$ defines a holomorphic Jacobi form of weight $k$ for $L$ as a theta block, which is also a holomorphic Jacobi form for $R_1(L)\oplus R_2(L)$ (see \cite[Theorem 4.2]{Gri18} or \cite[Theorem 4.6]{Wan19}). More precisely, this Jacobi form has the form (see \cite[Corollary 2.7]{Gri18} for this expression and notations):
\begin{equation}\label{eq:theta}
    \eta^{2k}(\tau) \prod_{r\in R_1(L), r>0} \left( \frac{\vartheta(\tau, (r,\mathfrak{z}))}{\eta(\tau)} \right)^{c_1} \prod_{s\in R_2(L), s>0} \left( \frac{\vartheta(\tau, (s/p,\mathfrak{z}))}{\eta(\tau)} \right)^{c_p}. 
\end{equation}
Since $R_1(L)$ is orthogonal to $R_2(L)$,  we see from the above expression that this Jacobi form is the product of two holomorphic Jacobi forms for $R_1(L)$ and $R_2(L)$.  The singular weight argument of  Jacobi forms says that if a holomorphic Jacobi form of index $L$ is not constant then its weight is not less than $\rank(L)/2$ (see \cite[Page 823]{Gri18}).  From this fact, we conclude the desired inequality. We then finish the proof of the result.
\end{proof}

\section{The construction of reflective modular forms}\label{Sec:construction}
In this section we construct reflective modular forms for all lattices listed in Theorem \ref{th:main}.
\begin{theorem}\label{th:construction1}
The following lattices have strongly symmetric $2$-reflective modular forms. We give a model for every lattice and indicate the weight $k$ of the modular form.
\begin{align*}
&\II_{6,2}(2_{\II}^{-2}), 2U\oplus D_4, k=72& &\II_{6,2}(2_{\II}^{-4}), U\oplus U(2)\oplus D_4, k=40&\\
&\II_{10,2}(2_{\II}^{+2}), 2U\oplus D_8, k=124&  &\II_{10,2}(2_{\II}^{+4}), 2U\oplus 2D_4, k=60& \\
&\II_{10,2}(2_{\II}^{+6}), 2U\oplus D_8^\vee(2), k=28& &\II_{4,2}(3^{-1}), 2U\oplus A_2, k=45& \\
&\II_{4,2}(3^{+3}), U\oplus U(3)\oplus A_2, k=18& &\II_{6,2}(3^{+2}), 2U\oplus 2A_2, k=42&\\
&\II_{6,2}(3^{-4}), U\oplus U(3)\oplus 2A_2, k=15& &\II_{8,2}(3^{+1}), 2U\oplus E_6, k=120&\\
&\II_{8,2}(3^{-3}), 2U\oplus 3A_2, k=39& &\II_{8,2}(3^{+5}), 2U\oplus E_6^\vee(3), k=12&\\
&\II_{6,2}(5^{+1}), 2U\oplus A_4, k=62&  &\II_{6,2}(5^{+3}), 2U\oplus A_4^\vee(5), k=12&\\
&\II_{8,2}(7^{-1}), 2U\oplus A_6, k=75& 
\end{align*}
\end{theorem}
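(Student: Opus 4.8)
The plan is to construct, for each lattice $M$ in the list, an explicit weakly holomorphic Jacobi form $\phi$ of weight $0$ and index $L$ (where $M=2U\oplus L$ or $M=U\oplus U(p)\oplus L$) whose Borcherds product is the desired strongly reflective modular form, and then read off the weight as one half of the constant term $2k$ appearing in the $q^0$-part of $\phi$, exactly as in the expansion displayed in the proof of Proposition \ref{Prop:Jacobi}. Concretely, I would first fix the root system $R(L)$ for each $L$: for the $2$-reflective cases the relevant roots are the norm-$2$ vectors, so I need a Jacobi form whose $q^0$-term is $c_1 q^{-1}+c_1\sum_{r\in R_1(L)}e^{2\pi i(r,\mathfrak{z})}+2k+O(q)$ with $c_1=1$ (strongly reflective) and all divisors of type $r^\perp$ with $(r,r)=2$. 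The root systems appearing ($D_4$, $D_8$, $2D_4$, $A_2$, $2A_2$, $3A_2$, $E_6$, $A_4$, $A_6$, and their duals rescaled) are precisely the ones for which such a holomorphic theta block of singular-type weight exists.

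For the models of the form $2U\oplus L$ with $L$ a genuine $ADE$ lattice (e.g. $2U\oplus D_4$, $2U\oplus E_6$, $2U\oplus A_6$), I would build $\phi$ as a product of Jacobi theta functions indexed by the roots, in the style of Gritsenko--Nikulin theta blocks: for each irreducible component of Coxeter number $h$ one takes the basic weight-$0$ Jacobi form attached to that root system, and the resulting weight $k$ is forced by identity \eqref{eq:one} of Proposition \ref{Prop:Jacobi}, namely $k=c_1h$ together with $C=c_1h$. I would verify that the candidate weights in the table ($k=72$ for $D_4$, $k=120$ for $E_6$, $k=75$ for $A_6$, etc.) are exactly the outputs of this formula, which is a fast consistency check rather than a computation. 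For the dual-rescaled cases such as $2U\oplus D_8^\vee(2)$, $2U\oplus E_6^\vee(3)$, $2U\oplus A_4^\vee(5)$, the roots are the norm-$2p$ vectors $R_2(L)$, so the Borcherds input is instead the lift associated to the nontrivial discriminant form, and the weight drops to the singular weight $\rank(L)/2$ (here $12$ in each case), which is why these give singular-weight forms.

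The bridge to the $U\oplus U(p)\oplus L$ models ($U\oplus U(2)\oplus D_4$, $U\oplus U(3)\oplus A_2$, $U\oplus U(3)\oplus 2A_2$) is supplied by Remark \ref{rem:Dit}: once the form on $2U\oplus L$ is built, the Borcherds product of the lift of the same scalar modular form $f$ for $\Gamma_0(p)$ produces a reflective form on $U\oplus U(p)\oplus L$, and the two weights are related by the explicit recipe there. So I would construct the $2U\oplus L$ cases first and derive the others. The verification that each product is genuinely holomorphic and has exactly the advertised reflective divisors amounts to checking that $f$ (equivalently $\phi$) has no spurious principal Fourier coefficients producing non-reflective divisors, using the criterion in \cite[Theorem 6.2]{Sch06} that $f\vert_S$ has the required shape.

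I expect the main obstacle to be exhibiting the Jacobi form $\phi$ itself in each case and proving it is holomorphic with the correct $q^0$-term; the weight computation via \eqref{eq:one} and the passage between the two lattice models via Remark \ref{rem:Dit} are then essentially bookkeeping. In practice the hardest individual cases are those where $R(L)$ is a sum of several equal-Coxeter-number components (e.g. $2D_4$, $3A_2$), since one must check that a \emph{single} Jacobi form realizes all the required root reflections simultaneously with the common multiplicity $c_1=1$; for these I would rely on the theta-block / Weyl-denominator constructions attached to the affine root system, whose denominators reproduce precisely $\sum_{r\in R_1(L)}e^{2\pi i(r,\mathfrak{z})}$ with the correct constant term.
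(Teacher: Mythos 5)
Your proposal has a genuine gap at its center: it never establishes the \emph{existence} of the Borcherds inputs. The identities \eqref{eq:one}--\eqref{eq:two} of Proposition \ref{Prop:Jacobi} are necessary constraints on the $q^0$-term of a weight-$0$ Jacobi form; they are not an existence theorem, and writing down the desired $q^0$-term $c_1q^{-1}+c_1\sum_{r\in R_1(L)}e^{2\pi i(r,\mathfrak{z})}+2k+O(q)$ does not produce a weakly holomorphic Jacobi form having exactly this singular part --- that requires either an explicit construction or a Bruinier-type obstruction computation (which the paper carries out only once, for $2U\oplus T_8$, in a different theorem). Your appeal to theta blocks and affine Weyl denominators does not fill this hole: those are \emph{holomorphic} Jacobi forms of positive weight, the inputs of additive liftings, not weakly holomorphic forms of weight $0$ with a $q^{-1}$-pole. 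You yourself flag ``exhibiting $\phi$ itself'' as the main obstacle --- but that obstacle \emph{is} the theorem. The paper's proof avoids the existence problem entirely: it quotes ready-made forms (quasi pull-backs of the Borcherds form $\Phi_{12}$ from \cite{GN18}, the form of \cite{GW19} for $2U\oplus A_4^\vee(5)$, Scheithauer's singular-weight form on $2U\oplus 3A_2$ from \cite{Sch06}) and transports them by Lemma \ref{Lem:reduction2p} (rescaling, since $2U(3)\oplus 3A_2=(2U\oplus 3A_2)^\vee(3)$), Lemma \ref{Lem:reductionMa} (passing to the overlattice $2U\oplus E_6^\vee(3)$), and Lemma \ref{Lem:pullbacktest} (quasi pull-back along $U\oplus U(3)\oplus A_2\hookrightarrow U\oplus U(3)\oplus 2A_2\hookrightarrow 2U\oplus E_6^\vee(3)$); Proposition \ref{Prop:Jacobi} enters only at the end, to pin down weights.

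Several of your concrete claims are also incorrect and would derail even the bookkeeping part of your plan. First, \eqref{eq:one} with $c_p=0$, $c_1=1$ gives $C=h$ and $k=12(h+1)-h\rank(L)/2$, not $k=c_1h$: for $D_4$ ($h=6$) this yields $k=72$, while your formula gives $6$. Second, the rescaled-dual cases are still $2$-reflective, so their divisors come from norm-$2$ vectors of $M$, not from $R_2(L)$, and the weights are $28$, $12$, $12$ --- not ``the singular weight $\rank(L)/2$, here $12$ in each case'' (note $\rank(L)/2$ equals $4,3,2$ there). Indeed $E_6^\vee(3)$ and $A_4^\vee(5)$ contain no $2$-roots, so Proposition \ref{Prop:Jacobi}(1) forces $k=12c_1=12$, whereas $D_8^\vee(2)$ \emph{does} contain $2$-roots (an $8A_1$ coming from the sixteen norm-one vectors of $D_8^\vee$), giving $k=12\cdot 3-8=28$. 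Third, Remark \ref{rem:Dit} transports reflective forms from $U\oplus U(p)\oplus L$ to $2U\oplus L$, the opposite of the direction you use; reversing it requires knowing that the input for $2U\oplus L$ is a Scheithauer lift of a scalar $\Gamma_0(p)$-form and re-verifying that the lift to the new discriminant form has reflective principal part. The paper instead obtains the $U\oplus U(p)\oplus L$ entries by quasi pull-back, the weight growing by half the number of $2$-roots of the orthogonal complement ($12\to 15\to 18$ along the $A_2$-tower); and since the weights genuinely differ between the two hyperbolic parts over the same $L$ ($72$ for $2U\oplus D_4$ versus $40$ for $U\oplus U(2)\oplus D_4$), no ``same $f$, same weight'' recipe can be correct without further argument.
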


\begin{proof}
The 2-reflective modular form for $\II_{6,2}(5^{+3})$ was constructed in \cite{GW19}.  The 2-reflective modular forms for $\II_{4,2}(3^{+3})$, $\II_{6,2}(3^{-4})$ and $\II_{8,2}(3^{+5})$ can be constructed using the following embedding of lattices
$$
U\oplus U(3)\oplus A_2 \hookrightarrow U\oplus U(3)\oplus 2A_2 \hookrightarrow U\oplus U(3)\oplus 3A_2 \cong 2U\oplus E_6^\vee(3).
$$
By  Lemma \ref{Lem:pullbacktest}, it suffices to show that $2U\oplus E_6^\vee(3)$ is $2$-reflective. Notice that 
\begin{equation}\label{towerA2}
2U\oplus E_6^\vee(3)\cong U\oplus U(3)\oplus 3A_2 \hookleftarrow 2U(3)\oplus 3A_2 = (2U\oplus 3A_2)^\vee (3).
\end{equation}
There is a $6$-reflective modular form of singular weight $3$ for $2U\oplus 3A_2$ in \cite{Sch06}. By Lemma \ref{Lem:reduction2p}, the lattice $2U(3)\oplus 3A_2$ is 2-reflective. We obtain by Lemma \ref{Lem:reductionMa} that $2U\oplus E_6^\vee(3)$ is also $2$-reflective. Since $E_6^\vee(3)$ has no 2-roots, we deduce from Proposition \ref{Prop:Jacobi} that the 2-reflective modular form for $2U\oplus E_6^\vee(3)$ has weight $12$. Note that the weight of a quasi pull-back is easy to work out. For example, $A_2$ has six 2-roots. Thus the $2$-reflective modular form for $U\oplus U(3)\oplus 2A_2$ has weight $12+\frac{1}{2}\times 6=15$. The $2$-reflective modular forms for other lattices were constructed in \cite[\S 4]{GN18} as quasi pull-backs of the Borcherds form $\Phi_{12}$ for the even unimodular lattice of signature $(26,2)$. In general, a quasi pull-back is only modular for the discriminant kernel $\widetilde{\Orth}^+(M)$. However, when the lattice is of prime level, the set of $2$-reflective vectors has only one orbit under the action of $\widetilde{\Orth}^+(M)$. Thus the symmetrization of a quasi pull-back coincides with some power of the quasi pull-back. Therefore, these constructed forms are modular under $\Orth^+(M)$ and have only simple zeros. 
\end{proof}

\begin{theorem}
The following lattices have strongly symmetric $2p$-reflective modular forms with indicated weight $k$.
\begin{align*}
&\II_{6,2}(2_{\II}^{-2}), 2U\oplus D_4, k=24& &\II_{6,2}(2_{\II}^{-4}), U\oplus U(2)\oplus D_4, k=40&\\
&\II_{10,2}(2_{\II}^{+2}), 2U\oplus D_8, k=4&  &\II_{10,2}(2_{\II}^{+4}), 2U\oplus 2D_4, k=12& \\
&\II_{10,2}(2_{\II}^{+6}), 2U\oplus D_8^\vee(2), k=28& &\II_{4,2}(3^{-1}), 2U\oplus A_2, k=9& \\
&\II_{4,2}(3^{+3}), U\oplus U(3)\oplus A_2, k=18& &\II_{6,2}(3^{+2}), 2U\oplus 2A_2, k=6&\\
&\II_{6,2}(3^{-4}), U\oplus U(3)\oplus 2A_2, k=15& &\II_{8,2}(3^{-3}), 2U\oplus 3A_2, k=3&\\ &\II_{8,2}(3^{+5}), 2U\oplus E_6^\vee(3), k=12& &\II_{6,2}(5^{+3}), 2U\oplus A_4^\vee(5), k=2&
\end{align*}
\end{theorem}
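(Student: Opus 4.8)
The plan is to produce, for each of the twelve lattices, a Borcherds product whose divisor is supported exactly on the $2p$-reflective divisors, that is, a reflective form with $c_1=0$ and $c_p=1$ in the notation of Proposition \ref{Prop:Jacobi}; the weight is then forced by that proposition. I would organise the lattices by the value of $n_p$, disposing of the self-dual ones (those with $M^\vee(p)\cong M$) first.

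\emph{Self-dual lattices.} These are the five lattices with $n_p=1+n/2$: $U\oplus U(2)\oplus D_4$, $2U\oplus D_8^\vee(2)$, $U\oplus U(3)\oplus A_2$, $U\oplus U(3)\oplus 2A_2$ and $2U\oplus E_6^\vee(3)$. For a prime-level lattice $M$ of signature $(n,2)$ the dual $M^\vee$ has determinant $p^{-n_p}$ and rank $n+2$, so $\det M^\vee(p)=p^{\,n+2-n_p}$; when $n_p=1+n/2$ this equals $\det M=p^{n_p}$, and hence $M^\vee(p)$ has the same signature, level and determinant as $M$. By the uniqueness of prime-level lattices in a genus recalled in \S\ref{Sec:basic lemmas}, $M^\vee(p)\cong M$. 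Each of the five is strongly $2$-reflective by Theorem \ref{th:construction1}, and since the $2$-reflective vectors of $M^\vee(p)$ are precisely the $2p$-reflective vectors of $M$ (the remark preceding Lemma \ref{Lem:reduction2p}), that form---viewed on $M$ through $\Orth^+(M)=\Orth^+(M^\vee(p))$---is a strongly $2p$-reflective form of the same weight, namely $40,28,18,15,12$.

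\emph{The remaining lattices} all have the shape $2U\oplus L$ with $n_p<1+n/2$. As base cases I would take the three singular-weight strongly $2p$-reflective forms from Scheithauer's classification \cite{Sch06}, on $2U\oplus D_8$, $2U\oplus 3A_2$ and $2U\oplus A_4^\vee(5)$, of weights $4,3,2$. From $2U\oplus 3A_2$ one obtains $2U\oplus 2A_2$ and $2U\oplus A_2$ as quasi pull-backs along $2U\oplus A_2\hookrightarrow 2U\oplus 2A_2\hookrightarrow 2U\oplus 3A_2$; for $p=2$ one first passes from $2U\oplus D_8$ to $2U\oplus 2D_4$ through the index-two overlattice $D_4\oplus D_4\subset D_8$ via Lemma \ref{Lem:reduction2p}(2), and then pulls back to $2U\oplus D_4$ along $2U\oplus D_4\hookrightarrow 2U\oplus 2D_4$. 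Since quasi pull-backs preserve both the type and the multiplicity of divisors at prime level (Lemma \ref{Lem:pullbacktest} and the remark after it), each form stays strongly $2p$-reflective; the weight follows from \eqref{eq:one} with $c_1=0$, $c_p=1$ once $R_2(L)$ has been identified, e.g. $|R_2(A_2)|=6$ and $|R_2(2D_4)|=48$ give the weights $9$ and $12$ for $2U\oplus A_2$ and $2U\oplus 2D_4$.

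The step I expect to be the real obstacle is the overlattice passage from $2U\oplus D_8$ to $2U\oplus 2D_4$. It is unavoidable: Lemma \ref{Lem:Sch2p} caps $2p$-reflectivity at $n\le 2+24/(p+1)=10$ for $p=2$, so $2U\oplus 2D_4$ lies at the signature boundary and cannot be reached by a pull-back from any larger lattice. Unlike a quasi pull-back, the overlattice construction behind Lemma \ref{Lem:reduction2p}(2)---which rests on Ma's Lemma \ref{Lem:reductionMa} applied to the dual lattices---need not a priori preserve the multiplicity, so I must confirm $c_p=1$ rather than merely $c_p\ge 1$. Here I would use that all $2p$-reflective vectors form a single $\Orth^+$-orbit, whence $c_p$ is a single positive integer and \eqref{eq:one} gives weight $12c_p$; producing a form of the minimal weight $12$ (either by a careful multiplicity count in Ma's construction or by an independent $\Gamma_0(2)$ lift) then forces $c_p=1$. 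A secondary check is to confirm that the three base forms are indeed the $2p$-reflective, not the $2$-reflective, members of Scheithauer's singular-weight list, and that every $L$ above has $R_2(L)\neq\emptyset$ so that identity \eqref{eq:one} is available.
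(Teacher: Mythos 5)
Your overall route is the paper's own, spelled out in more detail than the paper itself gives: the rescaled-dual identity $M\cong M^\vee(p)$ turning the strongly $2$-reflective forms of Theorem \ref{th:construction1} into strongly $2p$-reflective ones of the same weight, Scheithauer's singular-weight forms from \cite{Sch06} as base cases, and quasi pull-backs with weights controlled by \eqref{eq:one}. (The paper only writes this out for one tower: it applies self-duality to $2U\oplus E_6^\vee(3)$ and reaches $U\oplus U(3)\oplus A_2$, $U\oplus U(3)\oplus 2A_2$ by pull-back, where you invoke self-duality for all five self-dual genera directly; the weights agree either way.) Your identification of the five self-dual genera, of the three base cases, and all of your weight computations via \eqref{eq:one} are correct. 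Your structural claim that $2U\oplus 2D_4$ cannot be reached by pull-back is also right, and for a stronger reason than the signature bound alone: even the primitive embedding $D_4\hookrightarrow D_8$ with complement $D_4$ fails, because the sixteen $4$-reflective vectors of $D_4$ of type $(\pm1,\pm1,\pm1,\pm1)$ satisfy $v/2\in D_4^\vee$ but $v/2\notin D_8^\vee$, so the pull-back from $2U\oplus D_8$ misses those divisors and is reflective only for a proper subgroup of $\Orth^+(2U\oplus D_4)$.

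The genuine gap is exactly the step you flag: strong reflectivity on $2U\oplus 2D_4$. Lemma \ref{Lem:reduction2p}(2) is proved by dualizing Lemma \ref{Lem:reductionMa}, and that argument produces a form for a finite-index subgroup which must then be symmetrized over $\Orth^+$; the symmetrized form is $4$-reflective with one common multiplicity $c\geq 1$ and weight $12c$, and nothing in that chain forces $c=1$, while ``produce a form of weight $12$'' restates the problem rather than solving it. Two ways to close it within the paper's toolkit. First, argue as the paper does for $2U\oplus T_8$: the multiplicity-$c$ form is a Borcherds product (Bruinier's converse theorem applies since $2U\oplus 2D_4$ splits $2U$), so the principal part $c\sum_{\gamma^2\equiv 1\,\m\, 2}q^{-1/2}\mathbf{e}_\gamma$ is unobstructed; by linearity of the pairing in \cite[Theorem 1.17]{Bru02} the multiplicity-one principal part is unobstructed as well, and its Borcherds product is the desired strongly $4$-reflective form, of weight $12$ by \eqref{eq:one}. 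Second, and this is what the paper's citations of \cite{Sch06, Sch15} actually provide, lift the $\Gamma_0(2)$-form $\eta(2\tau)^8\eta(\tau)^{-16}$ (weight $-4$, trivial character, holomorphic at $\infty$, with a $q^{-1/2}$ pole at the cusp $0$) to the discriminant form of $2U\oplus 2D_4$: the principal part of the lift is then supported only on components $\mathbf{e}_\gamma$ with $\gamma^2\equiv 1\,\m\,2$, so the product is purely $4$-reflective, and Scheithauer's explicit coefficient formulas normalize the multiplicity to one. Either construction also re-proves the $2U\oplus D_8$ base case, so the $p=2$ column needs no overlattice detour at all, and your final pull-back to $2U\oplus D_4$ (weight $12+\tfrac{1}{2}\cdot 24=24$) then goes through as you describe.
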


\begin{proof}
These $2p$-reflective modular forms are also constructed using quasi pull-backs of some known reflective modular forms. Some of them can also be constructed as additive liftings of Jacobi forms (see \cite[\S 5]{Gri18}).  We only consider one tower. Since $2U\oplus E_6^\vee(3)= (2U\oplus E_6^\vee(3))^\vee(3)$, we have by Lemma \ref{Lem:reduction2p} that $2U\oplus E_6^\vee(3)$ has a $6$-reflective modular form of weight $12$. Then the quasi pull-backs related to the tower  \eqref{towerA2} give 6-reflective modular forms of weight $15$ and $18$ for $U\oplus U(3)\oplus 2A_2$ and $U\oplus U(3)\oplus A_2$, respectively. Again, a quasi pull-back is usually not invariant under $\Orth^+(M)$,  but its symmetrization will give a symmetric $2p$-reflective modular form whose weight can be determined by Proposition \ref{Prop:Jacobi}. 
\end{proof}

\begin{theorem}\label{th:construction3}
The following lattices have symmetric reflective modular forms with multiplicity $c_1=1$. These modular forms cannot be decomposed into a product of $2$-reflective and $2p$-reflective modular forms. We indicate their weight $k$, multiplicity $c_p$, and whether they are cusp forms.
\begin{align*}
&\II_{14,2}(2_{\II}^{-2}),&  &2U\oplus E_8\oplus D_4, &  & k=144, &  & c_2=8, &  & \text{cusp}\\
&\II_{14,2}(2_{\II}^{-4}),&  &2U\oplus D_8\oplus D_4, &  & k=80, &  & c_2=4 , &  & \text{cusp}\\
&\II_{14,2}(2_{\II}^{-6}),&  &2U\oplus 3D_4, &  & k=48,&  & c_2=2 , &  & \text{cusp}\\
&\II_{14,2}(2_{\II}^{-8}),& &2U\oplus D_8^\vee(2)\oplus D_4, &  & k=32,&  & c_2=1, &  & \text{cusp}\\
&\II_{18,2}(2_{\II}^{+2}),& &2U\oplus E_8\oplus D_8, &  & k=68,&  & c_2=16, &  & \text{cusp}
\\
&\II_{18,2}(2_{\II}^{+4}),& &2U\oplus E_8\oplus 2D_4, &  & k=36, &  & c_2=8, &  & \text{cusp}\\
&\II_{18,2}(2_{\II}^{+6}),& &2U\oplus E_8\oplus D_8^\vee(2), &  & k=20,&  & c_2=4, &  & \text{cusp}\\
&\II_{18,2}(2_{\II}^{+8}),& &2U\oplus E_8\oplus E_8(2),&  & k=12, &  & c_2=2, &  & \text{non-cusp}\\
&\II_{18,2}(2_{\II}^{+10}),& &2U\oplus D_8\oplus E_8(2),&  & k=8,&  & c_2=1, &  & \text{non-cusp}\\
&\II_{22,2}(2_{\II}^{-2}),&  &2U\oplus 2E_8\oplus D_4,&  & k=24,&  & c_2=8, &  & \text{cusp}\\
&\II_{10,2}(3^{-2}),& & 2U\oplus E_6\oplus A_2,& & k=90,& & c_3=9,&  & \text{cusp}\\
&\II_{10,2}(3^{+4}),& &  2U\oplus 4A_2, & & k=36, & & c_3=3,&  & \text{cusp}\\
&\II_{10,2}(3^{-6}),& & 2U\oplus E_6^\vee(3)\oplus A_2,& & k=18,& & c_3=1,&  & \text{cusp}\\
&\II_{12,2}(3^{-1}),& & 2U\oplus E_8\oplus A_2,& & k=168,& & c_3=27,&  & \text{cusp}\\
&\II_{12,2}(3^{+3}),& & 2U\oplus E_6\oplus 2A_2,& & k=60,& & c_3=9,&  & \text{cusp}\\
&\II_{12,2}(3^{-5}),& & 2U\oplus 5A_2,& & k=24,& & c_3=3,&  & \text{cusp}\\
&\II_{12,2}(3^{+7}),& & 2U\oplus E_6^\vee(3)\oplus 2A_2,& & k=12,& & c_3=1,&  & \text{cusp}\\
&\II_{14,2}(3^{+2}),& & 2U\oplus E_8\oplus 2A_2,& & k=84,& & c_3=27,&  & \text{cusp}
\end{align*}

\begin{align*}
&\II_{14,2}(3^{-4}),& & 2U\oplus E_6\oplus 3A_2,& & k=30,& & c_3=9,&  & \text{cusp}\\
&\II_{14,2}(3^{+6}),& & 2U\oplus 6A_2,& & k=12,& & c_3=3,&  & \text{non-cusp}\\
&\II_{14,2}(3^{-8}),& & 2U\oplus E_6^\vee(3)\oplus 3A_2,& & k=6,& & c_3=1,&  & \text{non-cusp}\\
&\II_{20,2}(3^{-1}),& & 2U\oplus 2E_8\oplus A_2,& & k=48,& & c_3=27,&  & \text{cusp}\\
&\II_{6,2}(5^{-2}),& & 2U\oplus T_4,& & k=30,& & c_5=5,&  & \text{cusp}\\
&\II_{6,2}(5^{-4}),& & U\oplus U(5)\oplus T_4,& & k=10,& & c_5=1,&  & \text{cusp}\\
&\II_{10,2}(5^{+2}),& & 2U\oplus 2A_4,& & k=52,& & c_5=25,&  & \text{cusp}\\
&\II_{10,2}(5^{+4}),& & 2U\oplus A_4^\vee(5)\oplus A_4,& & k=12,& & c_5=5,&  & \text{non-cusp}\\
&\II_{10,2}(5^{+6}),& & 2U\oplus 2A_4^\vee(5),& & k=4,& & c_5=1,&  & \text{non-cusp}\\
&\II_{10,2}(5^{-1}),& & 2U\oplus T_8,& & k=120,& & c_5=45,&  & \text{cusp}\\
&\II_{4,2}(7^{+1}),& & 2U\oplus L_7,& & k=28,& & c_7=7, & &\text{cusp}\\
&\II_{4,2}(7^{-3}),& & U\oplus U(7)\oplus L_7,& & k=7,& & c_7=1,& &\text{cusp}
\\
&\II_{6,2}(7^{+2}),& & 2U\oplus 2L_7,& & k=20,& & c_7=7,& &\text{cusp}\\
&\II_{6,2}(7^{-4}),& & U\oplus U(7)\oplus 2L_7,& & k=5,& & c_7=1,& &\text{cusp}\\ 
&\II_{8,2}(7^{+3}),& & 2U\oplus 3L_7,& & k=12,& & c_7=7,& &\text{non-cusp}\\
&\II_{8,2}(7^{-5}),& & 2U\oplus A_6^\vee(7),& & k=3,& & c_7=1,& & \text{non-cusp}\\
&\II_{4,2}(11^{-1}), & & 2U\oplus L_{11},& & k=24,& & c_{11}=11,& &\text{cusp}\\
&\II_{4,2}(11^{+3}), & & U\oplus U(11)\oplus L_{11},& & k=4,& & c_{11}=1,& &\text{cusp}\\
&\II_{6,2}(11^{+2}),& & 2U\oplus 2L_{11},& & k=12,& & c_{11}=11,& &\text{non-cusp}\\
&\II_{6,2}(11^{-4}), & & U\oplus U(11)\oplus 2L_{11},& & k=2,& & c_{11}=1,& &\text{non-cusp}\\
&\II_{4,2}(23^{+1}),& &  2U\oplus L_{23},& & k=12,& & c_{23}=23,& &\text{non-cusp}\\
&\II_{4,2}(23^{-3}),& &  U\oplus U(23)\oplus L_{23},& & k=1,& & c_{23}=1,& &\text{non-cusp}
\end{align*}
The lattices $L_7$, $L_{11}$ and $T_4$ are defined as follows
\begin{align*}
&L_7=\left( \begin{array}{cc}
2 & 1 \\ 
1 & 4
\end{array}  \right),& &L_{11}=\left( \begin{array}{cc}
2 & 1 \\ 
1 & 6
\end{array}  \right),&\\
&L_{23}=\left( \begin{array}{cc}
2 & 1 \\ 
1 & 12
\end{array}  \right),& &T_4=\left( \begin{array}{cccc}
2 & 1 & 1 & 1 \\ 
1 & 2 & 0 & 1 \\ 
1 & 0 & 4 & 2 \\ 
1 & 1 & 2 & 4
\end{array}  \right).&
\end{align*}
The lattice $T_8$ is a nontrivial even overlattice of $E_7\oplus A_1(5)$ and thus it has determinant $5$.
\end{theorem}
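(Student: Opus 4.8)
The plan is to realise every form in the list as a Borcherds product, using two mechanisms: Scheithauer's lifting of scalar modular forms for $\Gamma_0(p)$ to vector-valued modular forms for the Weil representation of $M^\vee/M$, and the quasi pull-back of Lemma \ref{Lem:pullbacktest}. Since each model splits a hyperbolic plane, the Borcherds product of a suitable vector-valued input is automatically a reflective modular form for $\Orth^+(M)$, so it suffices to produce the right input data. For the maximal lattice in each tower (the entries of largest $n$, e.g. $2U\oplus 2E_8\oplus D_4$ for $p=2$ and $2U\oplus 2E_8\oplus A_2$ for $p=3$, together with the lattices $2U\oplus T_4$, $2U\oplus T_8$, $2U\oplus L_7$ built on the exceptional Gram matrices) I would exhibit a nearly holomorphic modular form $f$ of weight $1-n/2$ for $\Gamma_0(p)$ with a character whose only principal part is $aq^{-1}$ at the cusp $\infty$ and $cq^{-1/p}$ at the cusp $0$, exactly as in the proof of Lemma \ref{Lem:Dit}. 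By \cite[Theorem 6.2]{Sch06} this shape forces the associated Borcherds product to have only reflective divisors: the coefficient $a$ gives the $2$-divisors with multiplicity $c_1=1$ and $c$ gives the $2p$-divisors with multiplicity $c_p$, while the weight is one half of the constant term of the zero component of the lift, computed from the constant terms of $f$ and $f|_S$. Moreover the same $f$ lifts to the Weil representations of both $2U\oplus L$ and $U\oplus U(p)\oplus L$, producing the paired entries whose $2p$-multiplicities differ by the factor $p$ recorded in Remark \ref{rem:Dit}.

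The remaining entries are reached by descending the towers formed by the embeddings $2U\oplus L\hookrightarrow 2U\oplus L\oplus K$ with $K$ a root lattice ($A_2$, $D_4$, $E_8$), and applying the quasi pull-back. I would use the bookkeeping after Lemma \ref{Lem:pullbacktest}: the pull-back raises the weight by $\tfrac12(c_1\abs{R_1(K)}+c_p\abs{R_2(K)})$, the half-number of roots of $F$ in the removed summand counted with multiplicity, while fixing $c_1$ and $c_p$. For instance each removed $E_8$ contributes $120$ from its $240$ norm-$2$ roots, and for $p=3$ each removed $A_2$ contributes $3(1+c_p)$ since $A_2$ carries six norm-$2$ and six norm-$6$ roots; this reproduces the listed weights from the seeds along each chain, and the overlattice and rescaling relations of Lemmas \ref{Lem:reductionMa} and \ref{Lem:reduction2p} connect the lattices of equal $n$ but different determinant. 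Each value of $c_p$ is cross-checked against \eqref{eq:one}, and \eqref{eq:two} confirms that the divisor is exactly $R_1(L)\cup R_2(L)$ and nothing more.

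The cusp property is read off from the leading Fourier--Jacobi coefficient of $F$, the theta block attached to the $q^0$-term of the weight-$0$ index-$L$ Jacobi form in the proof of Proposition \ref{Prop:Jacobi}; $F$ is a cusp form exactly when this holomorphic Jacobi form is a Jacobi cusp form, which I would verify from the explicit expansion (the entries whose weight equals the singular weight $n/2-1$, such as $\II_{18,2}(2_{\II}^{+10})$ and $\II_{4,2}(23^{-3})$, are automatically non-cusp). For the non-decomposition statement, suppose $F=F_2\cdot F_{2p}$ with $F_2$ purely $2$-reflective and $F_{2p}$ purely $2p$-reflective. As $F_{2p}$ has no $2$-divisor, $F_2$ carries the whole $2$-divisor of $F$ and inherits $c_1=1$, so it is a $2$-reflective form for $2U\oplus L$. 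When $p\geq 5$ the subsystems $R_1(L)$ and $R_2(L)$ are orthogonal and both non-empty, so $R_1(L)$ does not span $L\otimes\RR$; since a pure reflective form needs its root system to span by Proposition \ref{Prop:Jacobi}(2), no such $F_2$ exists. When $p=2,3$ I argue by weight: if the components of $R_1(L)$ have unequal Coxeter numbers then \eqref{eq:two} already rules out $F_2$, and otherwise \eqref{eq:one}--\eqref{eq:two} force the weight of $F_2$ to be $12(h_1+1)-\tfrac12 h_1\rank(L)$, which one checks is at least $k$ in each such entry; as the weights of $F_2$ and $F_{2p}$ sum to $k$ and are non-negative, this is impossible unless $F_{2p}$ is constant, whence $F$ would carry no $2p$-divisor, contradicting $c_p\neq 0$.

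The hardest part will be this first step: for each seed lattice, producing a scalar modular form $f\in M^!_{1-n/2}(\Gamma_0(p),\chi)$ with exactly the prescribed principal part and no spurious terms, and checking that the complete principal part yields only non-negative integral multiplicities, so that the Borcherds product is genuinely holomorphic. This requires an explicit description of $M^!_{1-n/2}(\Gamma_0(p),\chi)$ via eta quotients and Eisenstein series and a careful match of constant terms to land on the listed weights; the valence bound exploited in Lemma \ref{Lem:Dit} is exactly what keeps these numerical data consistent, and the cusp determinations above rely on the same expansions.
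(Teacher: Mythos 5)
Your toolkit (Scheithauer lifts of scalar $\Gamma_0(p)$-forms, Borcherds products, quasi pull-backs, and the pairing of Remark \ref{rem:Dit}) is the paper's toolkit, and your weight/multiplicity bookkeeping along the towers is correct. But two steps would fail as written. The first is the seeds. The paper does not solve for scalar forms with prescribed principal parts: for the tower seeds it cites Scheithauer's singular-weight classification \cite{Sch06}, and the three entries that sit at the top of no prime-level tower --- $\II_{22,2}(2_{\II}^{-2})=2U\oplus 2E_8\oplus D_4$, $\II_{20,2}(3^{-1})=2U\oplus 2E_8\oplus A_2$ and $\II_{10,2}(5^{-1})=2U\oplus T_8$ --- get separate constructions: the first two as quasi pull-backs of $\Phi_{12}$ along $2E_8\oplus D_4\hookrightarrow 3E_8$ and $2E_8\oplus A_2\hookrightarrow 3E_8$ (i.e.\ pull-back from the unimodular $\II_{26,2}$, outside your towers), and for $2U\oplus T_8$ the existence of the input is proved by Bruinier's obstruction principle, checking that the $4$-dimensional obstruction space $M_{6}(\Gamma_0(5),\chi)$, spanned by explicit eta quotients, annihilates the prescribed principal part. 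You instead declare these to be seeds realized by some $f\in M^!_{1-n/2}(\Gamma_0(p),\chi)$ with exactly two singular terms, and you correctly flag this as the hardest step; but prescribing principal parts at two cusps of a negative-weight form is precisely where obstructions live, so ``I would exhibit $f$'' is not a reduction of the problem --- it \emph{is} the problem. Without the $\Phi_{12}$ pull-backs or the obstruction computation, the entries $\II_{22,2}(2_{\II}^{-2})$, $\II_{20,2}(3^{-1})$ and above all $2U\oplus T_8$ (with $k=120$, $c_5=45$) remain unproven.

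The second gap is in your non-decomposition argument for $p\geq 5$, which rests on the claim that $R_1(L)$ and $R_2(L)$ are both non-empty for every listed lattice. This is false: for $L=2A_4$ (the entry $\II_{10,2}(5^{+2})$) one has $R_2(L)=\emptyset$ while $R_1(L)=2A_4$ \emph{does} span $L\otimes\RR$, so your spanning contradiction evaporates entirely; for $L=2A_4^\vee(5)$ and $L=A_6^\vee(7)$ it is $R_1(L)$ that is empty. The paper's route avoids this case analysis: a factor $F_{2p}$ would be a $2p$-reflective Borcherds product, so Lemma \ref{Lem:Sch2p} forces $n\leq 2+24/(p+1)$, which excludes all entries except a handful of small-rank ones, and only those few are handled by the spanning argument via Proposition \ref{Prop:Jacobi}. (Your own weight argument for $p=2,3$ would in fact repair the $p\geq 5$ failures --- e.g.\ for $2U\oplus 2A_4$, equation \eqref{eq:one} forces a putative $2$-reflective factor to have weight $52=k$, so $F_{10}$ would be constant, contradicting $c_5=25$ --- but as stated your $p\geq5$ case is broken.) Finally, a smaller but real issue: cuspidality cannot be decided from the Fourier--Jacobi expansion at a single cusp, since the modular variety generally has several inequivalent $0$- and $1$-dimensional cusps; the paper gets cuspidality of all quasi pull-backs from \cite{GHS13} and treats $2U\oplus T_8$ by a uniqueness-of-cusps argument, whereas your single-cusp criterion only suffices for the \emph{non}-cusp entries.
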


In general, a lattice of signature $(n,2)$ may have several models, for example
\begin{align*}
&U\oplus U(2)\oplus \text{Barnes-Wall lattice}  \cong U\oplus U(2)\oplus E_8\oplus E_8(2) \\
\cong &  U\oplus U(2)\oplus 4D_4 \cong U\oplus U(2)\oplus D_8\oplus D_8^\vee(2)
\cong  2U\oplus E_8(2)\oplus D_8\\
\cong &  2U(2)\oplus D_8\oplus 2D_4 \cong 2U\oplus D_8^\vee(2)\oplus 2D_4\cong 2U(2)\oplus E_8\oplus D_8^\vee(2).
\end{align*}
In the above theorem, we prefer to give models of type $2U\oplus L$.

\begin{proof}
We have the following embeddings of lattices 
\begin{align*}
&p=2& &U\oplus U(2)\oplus 4D_4 \hookleftarrow U\oplus U(2)\oplus 3D_4 \hookleftarrow U\oplus U(2)\oplus 2D_4 \hookleftarrow U\oplus U(2)\oplus D_4,\\
&p=3&  &U\oplus U(3)\oplus 6A_2 \hookleftarrow U\oplus U(3)\oplus 5A_2 \hookleftarrow U\oplus U(3)\oplus 4A_2 \hookleftarrow U\oplus U(3)\oplus 3A_2  \\
&& &\hookleftarrow U\oplus U(3)\oplus 2A_2 \hookleftarrow U\oplus U(3)\oplus A_2,\\
&p=5& &U\oplus U(5)\oplus 2T_4 \hookleftarrow U\oplus U(5)\oplus T_4,\\
&p=7& &U\oplus U(7)\oplus 3L_{7} \hookleftarrow U\oplus U(7)\oplus 2L_{7} \hookleftarrow U\oplus U(7)\oplus L_{7},\\
&p=11& &U\oplus U(11)\oplus 2L_{11} \hookleftarrow U\oplus U(11)\oplus L_{11}.
\end{align*}
For the first lattice in every tower, Scheithauer \cite{Sch06} constructed a strongly reflective modular form of singular weight. We then construct many reflective modular forms using quasi pull-backs. For some other lattices, we use Lemma \ref{rem:Dit} to build reflective modular forms. For example, we construct the reflective modular form for $\II_{18,2}(2_{\II}^{+n_2})$, where $n_2=2,4,6,8$. Firstly, by \cite{Sch06}, there is a strongly reflective modular form of singular weight $8$ for $\II_{18,2}(2_{\II}^{+10})$. This modular form is a Borcherds product of a vector-valued modular form which is a lifting of the $\Gamma_0(2)$-modular form $f(\tau)=\eta^{-8}(\tau)\eta^{-8}(2\tau)$. We calculate
\begin{align*}
f&=q^{-1}+8+O(q),\\
f\lvert_S&=16q^{-1/2}+128+O(q^{1/2}).
\end{align*}
Thus the lifting of $f$ for the discriminant form of $\II_{18,2}(2_{\II}^{+n_2})$ gives a nearly holomorphic vector-valued modular form of weight $-4$ whose Borcherds product is a reflective modular form with multiplicities $c_1=1$ and $c_2=2^{(10-n_2)/2}$ and its weight is given by $k=\frac{1}{2}(8+128/16\times 2^{(10-n_2)/2})$. We then finish the construction.

We next give the construction for three exceptional lattices. The reflective modular forms for $2U\oplus 2E_8\oplus D_4$ and $2U\oplus 2E_8\oplus A_2$ were constructed in \cite{Gri18} and \cite{Wan19} as quasi pull-backs $2E_8\oplus D_4 \hookrightarrow 3E_8$ and $2E_8\oplus A_2 \hookrightarrow 3E_8$ of the Borcherds form $\Phi_{12}$. To construct the reflective modular form for $2U\oplus T_8$, we use the obstruction principle of Borcherds product established in \cite{Bor99}. We claim that there is a nearly holomorphic vector-valued modular form of weight $-4$ for the Weil representation of $\SL_2(\ZZ)$ associated to the discriminant form of $2U\oplus T_8$ with principal part 
$$
(q^{-1}+240)\textbf{e}_0 + \sum_{\gamma \in D(T_8), \gamma^2 \equiv \frac{2}{5}\m 2 } 45q^{-1/5} \textbf{e}_\gamma.
$$
The Borcherds product of this modular form gives a reflective modular form of weight $120$ with multiplicities $c_1=1$ and $c_5=45$.  Such a modular form exists if and only if the functional in \cite[Theorem 1.17]{Bru02} equals zero for all holomorphic modular forms of weight $6$ for the dual of the Weil representation. In our case, the dual of the Weil representation associated to the discriminant form of $T_8$ is itself.  By \cite[Corollary 5.5]{Sch15}, the obstruction space is the image of $M_{6}(\Gamma_0(5),\chi)$ under  Scheithauer's lifting, where $M_{6}(\Gamma_0(5),\chi)$ stands for the space of modular forms of weight $6$ for $\Gamma_0(5)$ with the character $\chi=\left(\frac{\cdot}{5} \right)$. Note that $M_{6}(\Gamma_0(5),\chi)$ has dimension $4$ and it is generated by $E_4(\tau)\eta^5(5\tau)/\eta(\tau)$, $E_4(\tau)\eta^5(\tau)/\eta(5\tau)$, $\eta^{15}(5\tau)/\eta^3(\tau)$ and $\eta^{15}(\tau)/\eta^3(5\tau)$. We then assert the existence of the desired vector-valued modular form by direct calculation. 

We explain how to decide whether each modular form is cuspidal or not. Recall that a modular form is cuspidal if and only if it vanishes on all $1$-dimensional and $0$-dimensional cusps. By \cite[Theorems 8.3 and 8.18]{GHS13}, the quasi pull-back increasing the weight is always cuspidal. For the case of $2U\oplus T_8$, it is a maximal lattice and the genus of $T_8$ contains only one class (see \cite{LMFDB}). Thus the associated modular variety has a unique zero-dimensional cusp and a unique one-dimensional cusp. The value of the modular form at the one-dimensional cusp is given by the Siegel operator and is equal to the zeroth Fourier--Jacobi coefficient which is a modular form for $\SL_2(\ZZ)$. We conclude that this $\SL_2(\ZZ)$-modular form is zero from the leading Fourier--Jacobi coefficient of the reflective modular form defined by \eqref{eq:theta}.  Hence the reflective modular form for $2U\oplus T_8$ is cuspidal. We claim that there are $12$ non-cuspidal reflective modular forms. Six of them have singular weight. The others have weight $12$, and a direct calculation shows that their leading Fourier--Jacobi coefficients are not Jacobi cusp forms. Alternatively, we can show that they do not vanish at the $1$-dimensional cusp corresponding to the decomposition $2U\oplus L$ satisfying that the root system associated to $L$ is empty. This proves our claim. 

Finally, we explain why these modular forms cannot be decomposed. If one modular form can be decomposed into a product of $2$-reflective and $2p$-reflective modular forms, then by Lemma \ref{Lem:Sch2p} we have the restriction $n\leq 2 + 24/(p+1)$. Thus we only need to consider a few cases. We prove one case and the proof of other cases is similar. If $U\oplus U(5)\oplus T_4$ is $2$-reflective, then $2U\oplus T_4$ is also $2$-reflective. This leads to a contradiction  by Proposition \ref{Prop:Jacobi} because the sublattice generated by $2$-roots of $T_4$ is $A_2$ which does not span $T_4\otimes \RR$.
\end{proof}

\section{The proof of Theorem \ref{th:main}}\label{Sec:proof}
In this section we prove Theorem \ref{th:main}. 
The rank of reflective lattices of fixed prime level is bounded by \cite[Theorem 4.9]{Wan18} and Lemma \ref{Lem:Dit}. From these bounds, we deduce that reflective lattices of prime level only exist in the following cases: 
  
\begin{enumerate}
\item $p=2$: $n=6$, 10, 14, 18 for all possible lattices; $n=22$ and $n_p=2$.
\item $p=3$: $n=4$, 6, 8, 10, 12, 14 for all possible lattices; $n=20$ and $n_p=1$.
\item $p=5$: $n=6$, $10$ for all possible lattices; $n=14$ and $n_p=1$ or $2$.
\item $p=7$: $n=4$, 6, 8 for all possible lattices; $n=12$ and $n_p=1$.
\item $p=11$: $n=4$, 6 for all possible lattices; $n=8, n_p=1$;  $n=12, n_p=1$.
\item $p\geq 13$ and $p\equiv 1 \m 4$: $n=6, n_p=1$; $n=6, n_p=2$; $n_p=10, n_p=1$.
\item $p=19$:  $n=4$ for all possible lattices; $n=6, n_p=2$; $n=8, n_p=1$.
\item $p=23$:  $n=4$ for all possible lattices; $n=6, n_p=2$; $n=8, n_p=1$.
\item $p> 23$ and $p\equiv 3 \m 4$: $n=4, n_p=1$; $n=6, n_p=2$; $n=8, n_p=1$.
\end{enumerate}

We prove our claim for two cases and the other cases can be proved in a similar way. 
\begin{itemize}
    \item The proof of (1):  By \cite[Theorem 4.9]{Wan18}, it suffices to prove that if $\II_{22,2}(2^{\epsilon_2 n_2})$ is reflective then $n_2=2$. Suppose that $\II_{22,2}(2^{\epsilon_2 n_2})$ is reflective for some $n_2 > 2$. By \S \ref{Sec:basic lemmas}, $n_2$ is even. It is easy to see that this lattice can be represented as $U\oplus U(2)\oplus L$. This contradicts Lemma \ref{Lem:Dit} because $22 > 2 + 48 / (2+1)$. 
    \item The proof of (6): In this case, the lattice $M=\II_{n,2}(p^{\epsilon_p n_p})$ satisfies that $n\in 2 + 4\ZZ$. We assume that $M$ is reflective. By \cite[Theorem 4.9]{Wan18}, we have the bound
    $$
    n \leq 10 + 24/ (p+1),
    $$
    which yields that $n\leq 10$ when $p \geq 13$. When $n=6$, if $n_p\geq 3$ then $M$ can be expressed as $U\oplus U(p) \oplus L$. Then Lemma \ref{Lem:Dit} implies that 
    $$
    6 \leq 2 + 48/ (p+1), \quad \text{i.e.} \quad p \leq 11.
    $$
    This contradicts our assumption $p\geq 13$. When $n=10$,  if $n_p=2$ then $M = U\oplus U(p) \oplus E_8$. Thus $M$ has the form $U\oplus U(p)\oplus L$ when $n=10$ and $n_p\geq 2$. By Lemma \ref{Lem:Dit}, such a lattice is not reflective because $10 \leq 2 + 48/ (p+1)$ does not hold. 
\end{itemize}

In the previous section, we have constructed reflective modular forms for all lattices in Theorem \ref{th:main}. To complete the proof, it suffices to prove that any lattice which appears in the above (1)--(9) and is not listed in Theorem \ref{th:main} is not reflective.

We have known that all lattices formulated in (1) and (2) are reflective. This completes the proof of Theorem  \ref{th:main} for $p=2$, $3$. For the remaining seven cases, we divide the proof into two cases, i.e. the case $p=4x+3$ and the case $p=4x+1$. 

\subsection{The case of p=4x+3}
Let $p=4x+3$ be a prime number larger than 3. Then $p\geq 7$. We have to consider the following cases. 

(I) $n=4$. We only need to consider the cases $n_p=1$ and $n_p=3$. 

When $n_p=1$, the lattice has genus $\II_{4,2}(p^{(-1)^{x+1}})$ and can be represented as $2U\oplus L_p$, where
$$
L_p=\left( \begin{array}{cc}
2 & 1 \\ 
1 & 2x+2
\end{array}  \right).
$$
Suppose that $2U\oplus L_p$ has a symmetric reflective modular form $F$.  By Proposition \ref{Prop:Jacobi},  the root system associated to $L_p$ is $A_1\oplus A_1(p)$. Moreover, the weight and multiplicities of $F$ satisfy 
\begin{align*}
c_p&=pc_1,\\
k&=(35-p)c_1.
\end{align*}
It is clear that $c_1\neq 0$.  The form $F$ is a Borcherds product and we denote its input by $f$.  By the two identities,  the Borcherds product of $f/c_1$ is well defined and gives a reflective modular form whose multiplicity of $2$-reflective divisors is $1$.  Therefore,  we can assume that $c_1=1$.  In view of the singular weight (see \eqref{eq:singular}), we have $k\geq (1+p)/2$, which yields $p\leq 23$. When $p=19$, we have $c_p=19$ and $k=16$. By Scheithauer's condition in \cite[\S 11]{Sch06}, we have 
$$
\frac{3}{k}\cdot \frac{1}{B_{3,\psi}}(p+1)^2=1.
$$
A direct calculation shows that this identity does not hold when $p=19$, a contradiction.

When $n_p=3$, we can represent the lattice as $U\oplus U(p)\oplus L_p$. 
By Lemma \ref{Lem:Sch2p}, $U\oplus U(p)\oplus L_p$ has no $2p$-reflective modular forms when $p=19$ or $p>23$. This follows that the reflective modular form on this lattice (if exists) must have multiplicity $c_1\neq 0$. We conclude from Lemma \ref{rem:Dit} that $U\oplus U(p)\oplus L_p$ is not reflective if $p=19$ or $p>23$, otherwise $2U\oplus L_p$ would be reflective which leads to a contradiction. We have thus proved the following: 

\vspace{3mm}

\textbf{Conclusion.} The lattice $\II_{4,2}(p^{\epsilon_p n_p})$ is not reflective when $p=4x+3$ is $19$ or larger than $23$, where $n_p=1, 3$.

\vspace{3mm}

(II) $n=6$ and $n_p=2$. In this case, the lattice can be written as $2U\oplus 2L_p$. The root system associated to $2L_p$ is $2A_1\oplus 2A_1(p)$. Suppose that $2U\oplus 2L_p$ has a reflective modular form.  Then its weight and multiplicity satisfy 
\begin{align*}
c_p&=pc_1,\\
k&=(34-2p)c_1.
\end{align*}
Similarly, we can assume that $c_1=1$. In view of the singular weight, we have $k\geq 1+p$, which yields $p\leq 11$. This proves the following:

\vspace{3mm}

\textbf{Conclusion.} The lattice $\II_{6,2}(p^{\epsilon_p 2})$ is not reflective when $p=4x+3>11$. 

\vspace{3mm}

(III) $n=8$ and $n_p=1$. In this case, the lattice has genus $\II_{8,2}(p^{(-1)^{x}})$ and we write it as $M_{6,p}=2U\oplus L_{6,p}$. Note that $L_{6,p}$ can be constructed as a maximal even overlattice of $3L_p$ and thus it contains $2$-roots. Suppose that $M_{6,p}$ has a reflective modular form $F$.  By \cite[Proposition 3.2]{Sch06}, there is no vector $v\in M_{6,p}^\vee$ satisfying $\frac{1}{2}(v,v)\equiv \frac{1}{p} \mod 1$. Thus $F$ must be a $2$-reflective modular form and then the root system associated to $L_{6,p}$ is a root lattice of rank $6$ and $ADE$-type. This is impossible when $p>7$ because there is no root lattice of rank $6$ whose determinant is divisible by $p$. We then prove the following:

\vspace{3mm}

\textbf{Conclusion.} The lattice $\II_{8,2}(p^{\epsilon_p 1})$ is not reflective when $p=4x+3>7$. 

\vspace{3mm}

(IV) $p=7$ or $11$.   When $p=7$, we need to prove that the lattice is not reflective if $n=12$ and $n_p=1$.  We choose $2U\oplus E_8\oplus L_7$ as the model of this lattice.  The associated root system of  $E_8\oplus L_7$ defined by Proposition \ref{Prop:Jacobi} is $E_8\oplus A_1\oplus A_1(7)$ (i.e.  $R_1(L)=E_8\oplus A_1$ and $R_2(L)=A_1(7)$).  Since the components $E_8$ and $A_1$ have different Coxeter numbers, we conclude from Proposition \ref{Prop:Jacobi} (3) that $2U\oplus E_8\oplus L_7$ is not reflective.  Similarly,  when $p=11$,  the lattice with $n=12$ and $n_p=1$ can be represented as $2U\oplus E_8 \oplus L_{11}$ and has associated root system $E_8\oplus A_1 \oplus A_1(11)$.  The same argument shows that it is not reflective.

\subsection{The case of p=4x+1}
Let $p=4x+1$ be a prime number and $p\geq 5$. The following lemma is useful for us. 

\begin{lemma}\label{Lem:2-roots}
Let $M$ be a lattice of signature $(6,2)$ and level $p$. Assume that the determinant of $M$ is $p$ or $p^2$. Then there exists a positive definite lattice $L$ containing $2$-roots such that $M=2U\oplus L$.
\end{lemma}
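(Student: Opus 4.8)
The plan is to produce the splitting $M = 2U \oplus L$ together with the root in $L$ simultaneously, by choosing the copy of $2U$ so that a fixed root of $M$ is trapped in its orthogonal complement. The key point to keep in mind throughout is that the orthogonal complement of a copy of $2U$ in $M$ is \emph{not} unique up to isometry (Witt cancellation fails over $\ZZ$), so it is not enough to split off an arbitrary $2U$ and hope the complement has roots; we must build the hyperbolic planes around a root.

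First I would record the discriminant data. Since $M$ has signature $(6,2)$ and prime level $p$ with determinant $p$ or $p^2$, its discriminant group $M^\vee/M$ is isomorphic to $(\ZZ/p)^{n_p}$ with $n_p \in \{1,2\}$, so its minimal number of generators satisfies $\ell(M^\vee/M) = n_p \le 2$. As $\rank M = 8 \ge \ell(M^\vee/M) + 2$, Nikulin's theorem \cite{Nik80} shows that $M$ splits off a hyperbolic plane $U$; in particular $M$ represents $2$, so $M$ contains a primitive vector $r$ with $(r,r) = 2$.

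Next I would pass to $N := r^\perp \subseteq M$, an even lattice of signature $(5,2)$ and rank $7$. Using Nikulin's bound on the discriminant form of an orthogonal complement \cite{Nik80}, $\ell(N^\vee/N) \le \ell(M^\vee/M) + \ell(\langle r\rangle^\vee/\langle r\rangle) = n_p + 1 \le 3$. The crux of the argument is then the rank inequality $\rank N = 7 \ge \ell(N^\vee/N) + 4$: applying Nikulin's splitting criterion twice (the second time to the indefinite lattice of signature $(4,1)$ obtained after the first split), we conclude $N \cong 2U \oplus L_1$ with $L_1$ positive definite of rank $3$. This inequality holds with no room to spare, which is exactly why the statement is delicate at signature $(6,2)$ and why I would expect this step to be the main obstacle.

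Finally, fixing such a copy of $2U$ inside $N = r^\perp$, we have $2U \perp r$. Since $2U$ is unimodular it splits off as an orthogonal direct summand of $M$, so $M = 2U \oplus L$ with $L := (2U)^\perp_M$ positive definite of rank $4$, level $p$ and determinant $p^{n_p}$. Because $r \perp 2U$, the root $r$ lies in $L$, and hence $L$ contains a $2$-root as required. The hard part, to reiterate, is not the existence of a root in $M$ nor the existence of some $2U$-splitting, but the combination: reconstructing $2U$ inside $r^\perp$ so that the root survives in the complement, which hinges on verifying that $r^\perp$ retains enough room (the sharp inequality $7 \ge \ell + 4$) to contain two hyperbolic planes.
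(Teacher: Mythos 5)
Your strategy coincides with the paper's: pick a $2$-root $r$, pass to $N=r^\perp$, split $2U$ off $N$ via Nikulin, then use unimodularity of $2U$ to get $M=2U\oplus L$ with $r\in L$ (your reassembly step is correct, and is just a rephrasing of the paper's overlattice argument). The gap is in the step you yourself call the crux. Nikulin's splitting criterion for an even indefinite lattice is $\rank \ge \ell+3$, not $\ell+2$ as quoted in your first step, where $\ell$ is the minimal number of generators of the discriminant group; and since splitting off the unimodular $U$ leaves the discriminant form (hence $\ell$) unchanged, two successive applications require $\rank \ge \ell+5$, not $\rank \ge \ell+4$. Concretely, with your bound $\ell(N^\vee/N)\le n_p+1\le 3$, the first split of $N$ (rank $7$, signature $(5,2)$) goes through, but the second needs $5\ge \ell+3$, i.e. $\ell\le 2$, which your bound does not provide: if $\ell=3$ the argument terminates at signature $(4,1)$. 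So the inequality $7\ge \ell+4$ does not deliver $N\cong 2U\oplus L_1$.

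The gap is repairable, and the repair is exactly the point the paper makes: one must compute $\ell(N^\vee/N)$ exactly instead of using the additive complement bound, which is wasteful here because the $2$-part and the $p$-part of the discriminant group are coprime. Since $p$ is odd, $M^\vee/M\cong(\ZZ/p)^{n_p}$ has no $2$-torsion; as the divisor of $r$ divides $(r,r)=2$, this forces $(r,M)=\ZZ$, and then $\det N=2p^{n_p}$ by \cite[Lemma 7.5]{GHS13}, as cited in the paper. Any finite abelian group of order $2p^{n_p}$ with $p$ odd has $\ell=\max(1,\ell_p)\le 2$ by the Chinese remainder theorem, where $\ell_q$ denotes the minimal number of generators of the $q$-primary part. (Alternatively, apply your complement bound prime by prime: $\ell_p(N^\vee/N)\le \ell_p(M^\vee/M)+\ell_p(\latt{r}^\vee/\latt{r})=n_p\le 2$ and $\ell_2(N^\vee/N)\le 0+1=1$.) With $\ell(N^\vee/N)\le 2$, the two applications of Nikulin's criterion succeed with no room to spare, namely $7\ge 2+5$ and $5\ge 2+3$; this, rather than $7\ge\ell+4$, is the genuinely sharp inequality, and it is precisely the paper's observation that the minimal number of generators of $M_v^\vee/M_v$ equals $2$.
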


\begin{proof}
We only prove the case of $\det(M)=p^2$ because the other case is similar. We choose a vector $v\in M$ with $(v,v)=2$. It is clear that the ideal generated by $(v,\ell)$, $\ell\in M$, is $\ZZ$. By \cite[Lemma 7.5]{GHS13}, the orthogonal complement $M_v$ of $v$ in $M$ has signature $(5,2)$ and determinant $2p^2$. The minimal number of generators of $M_v^\vee/M_v$ is $2$. By \cite{Nik80} or \cite[Lemma 2.3]{Wan19}, there exists a positive definite lattice $L_1$ of rank $3$ such that $M_v=2U\oplus L_1$. Since $\latt{v}\oplus M_v=2U\oplus\latt{v}\oplus  L_1$ has the overlattice $M$, the lattice $\latt{v}\oplus  L_1$ is not maximal and it has an even overlattice of determinant $p^2$ which is the desired $L$.
\end{proof}

We have to consider the following cases in order to prove Theorem \ref{th:main} for $p=4x+1$.

(I) $n=6$ and $n_p=1$. In this case, the lattice has genus $\II_{6,2}(p^{(-1)^{x+1}})$. By Lemma \ref{Lem:2-roots}, we can choose its model as $M_{4,p}=2U\oplus L_{4,p}$ such that $L_{4,p}$ contains $2$-roots. Suppose that $p>5$ and $M_{4,p}$ has a reflective modular form $F$.  By \cite[Proposition 3.2]{Sch06}, there is no vector $v\in M_{4,p}^\vee$ satisfying $\frac{1}{2}(v,v)\equiv \frac{1}{p} \mod 1$. Thus $F$ must be a $2$-reflective modular form and then the associated root system of $L_{4,p}$ is a root lattice of rank $4$ whose determinant is divisible by $p$. This is impossible when $p>5$, which follows that $M_{4,p}$ is not reflective if $p>5$.

\vspace{3mm}

\textbf{Conclusion.} The lattice $\II_{6,2}(p^{\epsilon_p 1})$ is not reflective if $p=4x+1>5$. 

\vspace{3mm}

(II) $n=6$ and $n_p=2$. In this case, the lattice has genus $\II_{6,2}(p^{-2})$.  Let $N_{4,p}=2U\oplus T_{4,p}$ be its model such that $T_{4,p}$ has $2$-roots.  Suppose that $N_{4,p}$ has a symmetric reflective modular form $F$. The root system associated to $T_{4,p}$ is non-empty and we denote it by $R_1\oplus R_2(p)$. Notice that $S_{4,p}:=(T_{4,p})^\vee(p)$ is an even lattice of level $p$ and determinant $p^2$. We then have $N_{4,p}\cong 2U\oplus S_{4,p}$. The root system associated to $S_{4,p}$ is $R_2\oplus R_1(p)$. Applying Proposition \ref{Prop:Jacobi} to the two models of $N_{4,p}$, we have $h_1=h_2$ and $\rank(R_1)=\rank(R_2)=2$. We denote the Coxeter number of $R_1$ by $h$. Similarly,  we can assume $c_1=1$ and then $c_p=p$. By Proposition \ref{Prop:Jacobi}, the weight of $F$ is given by $k= 12 + (11-p)h$.
By the singular weight argument, we have $k\geq 2(1+p)$. The root lattice $R_1$ has only two possible models: $2A_1$ with $h=2$ and $A_2$ with $h=3$. By direct calculation, there is a contradiction to the weight $k$ if $p\geq 13$. Thus $N_{4,p}$ is not reflective if $p>5$. 

\vspace{3mm}

\textbf{Conclusion.} The lattice $\II_{6,2}(p^{\epsilon_p 2})$ is not reflective if $p=4x+1>5$. 

\vspace{3mm}

(III) $n=10$ and $n_p=1$. In this case, we denote the lattice by $2U\oplus T_{8,p}$.  Suppose that $2U\oplus T_{8,p}$ has a reflective modular form $F$. The lattice $T_{8,p}$ can be constructed as an even overlattice of $E_7\oplus A_1(p)$, in which case the root system associated to $T_{8,p}$ must be $E_7\oplus A_1(p)$. Hence the weight and multiplicity of $F$ satisfy \begin{align*}
c_p&=9pc_1,\\
k&=(165-9p)c_1.
\end{align*}
In view of the singular weight, we have $165-9p\geq (7+9p)/2$, which follows that $p\leq 11$. The only possible case is $p=5$. We have thus proved that $2U\oplus T_{8,p}$ is not reflective if $p>5$. When $p=5$, we have $c_5=45$ and $k=120$, which coincide with our construction for $2U\oplus T_8$ in Theorem \ref{th:construction3}.

\vspace{3mm}

\textbf{Conclusion.} The lattice $\II_{10,2}(p^{\epsilon_p 1})$ is not reflective if $p=4x+1>5$. 

\vspace{3mm}

(IV) $p=5$.
It remains to consider the three cases: $n=10$ and $n_p=3$, $n=14$ and $n_p=1$ or $2$. For the first case, we notice that its model $2U\oplus A_4 \oplus T_4$ has associated root system $A_4\oplus A_2 \oplus A_2(5)$. Since $A_4$ and $A_2$ have different Coxeter numbers, we deduce from Proposition \ref{Prop:Jacobi} that $2U\oplus A_4 \oplus T_4$ is not reflective. For the last two cases, we prove that they are not reflective in a similar way using models $2U\oplus E_8\oplus A_4$ and $2U\oplus E_8\oplus T_4$.

\begin{remark}
It is an interesting question if $T_{4,p}$ and $(T_{4,p})^\vee(p)$ are isomorphic as lattices. When $p=5$, we know from \cite{LMFDB} that the genus of $T_{4,5}$ has a unique class $T_4$. Thus this question has a positive answer when $p=5$.  But we do not know the answer in general case.
\end{remark}

\section{Applications}\label{Sec:applications}
\subsection{The Kodaira dimension of orthogonal modular varieties}
In this subsection we use our reflective modular forms to determine the Kodaira dimension of some orthogonal modular varieties. By \cite[Theorem 2.1]{GH14}, if there is a strongly reflective modular form of large weight, then the modular variety is uniruled and thus has Kodaira dimension $-\infty$. 

Applying the Gritsenko--Hulek criterion to reflective modular forms in Theorem \ref{th:construction1}, we have

\begin{theorem}\label{Th6.1}
The orthogonal modular variety $\cD(M)/ \widetilde{\Orth}^+(M)$ is uniruled if $M$ is in one of the genera formulated in Theorem \ref{th:construction1}.
\end{theorem}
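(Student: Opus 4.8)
The plan is to deduce the statement directly from the uniruledness criterion of Gritsenko and Hulek \cite[Theorem 2.1]{GH14}. That criterion asserts that if $M$ has signature $(n,2)$ with $n\geq 3$ and there is a reflective modular form of weight $k>n$ for a finite-index subgroup $\Gamma<\Orth^+(M)$ whose divisor is the ramification divisor of the quotient map $\pi\colon\cD(M)\to\cD(M)/\Gamma$, then $\cD(M)/\Gamma$ is uniruled. The underlying mechanism is the adjunction relation $\pi^*K_{\cD(M)/\Gamma}\sim\lambda^{\otimes(n-k)}$ on the interior, where $\lambda$ is the tautological automorphic line bundle whose $k$-th tensor power has the weight-$k$ modular forms as its sections: when $k>n$ the anticanonical class becomes big, and the force of \cite{GH14} is that the boundary divisors and the contributions of the quotient singularities do not destroy this. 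So the whole argument reduces to feeding the forms of Theorem \ref{th:construction1} into this result and checking two hypotheses.

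First I would record that every lattice $M$ in Theorem \ref{th:construction1} carries a \emph{strongly} $2$-reflective modular form $F$ for $\Orth^+(M)$ of the weight $k$ tabulated there; since $\widetilde{\Orth}^+(M)<\Orth^+(M)$, this $F$ is a fortiori a reflective modular form for $\Gamma=\widetilde{\Orth}^+(M)$ of the same weight. The point requiring a short check is that the divisor of $F$ is \emph{exactly} the ramification divisor of $\pi$ for this $\Gamma$. The only elements of $\Orth^+(M)$ fixing a divisor are the reflections $\sigma_\gamma$ in the $2$-roots and the $2p$-roots of $M$. For a $2$-root and any $l\in M^\vee$ one has $\sigma_\gamma(l)-l=-(l,\gamma)\gamma\in M$, so $\sigma_\gamma\in\widetilde{\Orth}^+(M)$; for a $2p$-root one gets $\sigma_\gamma(l)-l=-(l,\gamma)(\gamma/p)$, which in general represents a nonzero class in $M^\vee/M$, so such a $\sigma_\gamma$ is not in $\widetilde{\Orth}^+(M)$. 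Hence the reflections in $\widetilde{\Orth}^+(M)$ are precisely the $2$-reflections, and the ramification divisor of $\pi$ is the reduced union of the $2$-reflective rational quadratic divisors. This is exactly the divisor of $F$ because $F$ is strongly $2$-reflective, so the divisor hypothesis of \cite[Theorem 2.1]{GH14} holds.

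It then remains to verify the numerical condition $k>n$ in each row of Theorem \ref{th:construction1}, which is immediate: the dimensions satisfy $n\leq 10$ throughout, while the two smallest weights are $k=12$, occurring for $\II_{8,2}(3^{+5})$ with $n=8$ and for $\II_{6,2}(5^{+3})$ with $n=6$, and in all remaining cases the gap $k-n$ is larger still. Thus $k>n$ for every lattice listed, and \cite[Theorem 2.1]{GH14} gives the uniruledness of $\cD(M)/\widetilde{\Orth}^+(M)$. I expect the only genuinely delicate point to be not in our part of the argument but inside \cite{GH14}, namely the bookkeeping of the boundary components and of the branch locus arising from non-canonical quotient singularities; since that analysis is already carried out there, nothing beyond the weight comparison above is needed on our side.
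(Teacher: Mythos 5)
Your strategy coincides with the paper's: apply \cite[Theorem 2.1]{GH14} to the strongly $2$-reflective forms of Theorem \ref{th:construction1}, and your weight check is correct ($n\le 10$ while $k\ge 12$ throughout the list). However, there is a genuine error in your identification of the ramification divisor, and the way you have stated the criterion makes that identification load-bearing. A rational quadratic divisor $v^\perp$ lies in the ramification locus of $\pi\colon\cD(M)\to\cD(M)/\Gamma$ if and only if $\sigma_v\in\Gamma$ \emph{or} $-\sigma_v\in\Gamma$, because $-\id$ acts trivially on $\cD(M)\subset\PP(M\otimes\CC)$ and $-\sigma_v$ fixes $v^\perp$ pointwise as well; you only test $\sigma_v$. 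This omission is not vacuous for the list at hand: take $M=2U\oplus A_2$ of genus $\II_{4,2}(3^{-1})$. This lattice contains $6$-reflective vectors $v$ (e.g.\ $v=(1,1,-2)\in A_2$ has norm $6$ and $v/3\in A_2^\vee$; indeed the paper constructs a strongly $6$-reflective form of weight $9$ on this lattice), and for such $v$ the reflection $\sigma_v$ acts as $-\id$ on $D(M)\cong\ZZ/3$, so that $-\sigma_v\in\widetilde{\Orth}^+(M)$ and the $6$-reflective divisors \emph{are} components of the ramification divisor. The weight-$45$ form $F$ does not vanish on them, so your claim that $\div(F)$ equals the ramification divisor is false here, and with your stated hypothesis of \cite[Theorem 2.1]{GH14} (divisor \emph{equal to} the ramification divisor) the argument collapses for this lattice. (For $2U\oplus E_6$, $2U\oplus A_4$ and $2U\oplus A_6$ your conclusion survives only by accident: these lattices happen to contain no $2p$-reflective vectors at all, so the reasoning is still wrong even though the equality holds.)

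The gap is repairable, and the repair shows most of your second paragraph is unnecessary: the actual hypothesis of \cite[Theorem 2.1]{GH14} is \emph{containment}, not equality — the form must be $\Gamma$-reflective, i.e.\ $\div(F)$ supported on the ramification divisor, with $k$ larger than $n$ times the maximal multiplicity. Ramification components on which $F$ does not vanish are harmless; they only contribute an additional effective divisor to $-\pi^*K$, strengthening the positivity. With that reading, all that is needed is your correct observation that $\sigma_v\in\widetilde{\Orth}^+(M)$ for every $2$-root $v$, so that the divisor of a strongly $2$-reflective form lies in the ramification divisor with multiplicity one, together with $k>n$ — which is precisely the paper's one-line argument. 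Note also that the multiplicity-sensitive form of the criterion, $k>n\cdot\max_i m_i$, is what the paper needs for the companion statement about $\Orth^+(M)$-quotients, where the forms have $c_p>1$; your multiplicity-one formulation would not cover that case.
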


Applying the Gritsenko--Hulek criterion to reflective modular forms in Theorem \ref{th:construction3}, we have

\begin{theorem}\label{Th6.2}
The orthogonal modular variety $\cD(M)/ \Orth^+(M)$ is uniruled if $M$ is in one of the following genera
\begin{align*}
&\II_{14,2}(2_{\II}^{-2})&
&\II_{14,2}(2_{\II}^{-4})& 
&\II_{14,2}(2_{\II}^{-6})&  
&\II_{14,2}(2_{\II}^{-8})& \\
&\II_{10,2}(3^{+4})& 
&\II_{10,2}(3^{-6})& 
&\II_{6,2}(5^{-4})& 
&\II_{4,2}(7^{-3}).& 
\end{align*}
\end{theorem}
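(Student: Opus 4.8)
The plan is to deduce the statement directly from the constructions in Theorem \ref{th:construction3} by feeding them into the Gritsenko--Hulek uniruledness criterion \cite[Theorem 2.1]{GH14}. That criterion says that the variety $\cD(M)/\Orth^+(M)$, which has dimension $n$, is uniruled as soon as it carries a reflective modular form $F$ whose weight $k$ is large compared with its vanishing along the branch divisor; writing $c=\max(c_1,c_p)$ for the largest multiplicity of $F$ along a branch component, the hypothesis I will use is the strict inequality $k>n\,c$. Every one of the eight genera in the statement already occurs in Theorem \ref{th:construction3}, so for each I have in hand an explicit reflective modular form for the \emph{full} group $\Orth^+(M)$ with $c_1=1$ and with the tabulated values of $c_p$ and $k$. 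Since $c_1=1\le c_p$ throughout, the criterion collapses to the single condition $k>n\,c_p$.

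The proof is then the finite check of $k>n\,c_p$, genus by genus. For the four lattices of dimension $n=14$ and level $2$ (with $c_2=8,4,2,1$) this reads $144>112$, $80>56$, $48>28$, $32>14$; for the two ten-dimensional lattices of level $3$ (with $c_3=3,1$) it reads $36>30$ and $18>10$; and for $\II_{6,2}(5^{-4})$ and $\II_{4,2}(7^{-3})$ (with $c_5=c_7=1$) it reads $10>6$ and $7>4$. All eight inequalities hold, so \cite[Theorem 2.1]{GH14} applies in each case and yields a covering family of rational curves through a general point; hence $\cD(M)/\Orth^+(M)$ is uniruled and has Kodaira dimension $-\infty$.

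The one place where genuine care is required -- and the only place the argument could fail -- is the strictness of the inequality in the tightest cases, namely $\II_{4,2}(7^{-3})$ with $(n,k,c_p)=(4,7,1)$, $\II_{6,2}(5^{-4})$ with $(6,10,1)$, and $\II_{10,2}(3^{+4})$ with $(10,36,3)$, where $k$ beats $n\,c_p$ by only $3$, $4$ and $6$ respectively. These lie just above the locus $k=n\,c_p$, on which the criterion breaks down and the variety need not be uniruled: the borderline strongly reflective examples with $k=n$, such as $\II_{12,2}(3^{+7})$ and $\II_{4,2}(11^{+3})$, fall exactly on this locus and are of Kodaira dimension $0$ rather than $-\infty$ (these are the examples highlighted in the introduction). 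Thus the essential input from Theorem \ref{th:construction3} is not merely the existence of reflective forms but that their weights, as computed there from the explicit Borcherds products, are \emph{strictly} above $n\,c_p$; granting this, no separate boundary or cusp analysis is needed and the conclusion is immediate.
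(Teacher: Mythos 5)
Your proposal is correct and follows essentially the same route as the paper: the paper also obtains this theorem by applying the uniruledness criterion of \cite[Theorem 2.1]{GH14} to the reflective modular forms constructed in Theorem \ref{th:construction3}, and your genus-by-genus verification of $k > n\,\max(c_1,c_p)$ simply makes explicit the numerical check the paper leaves implicit. Your remark about strictness is also consistent with the paper's selection of genera: the borderline cases with $k = n c_p$ (e.g.\ $\II_{10,2}(3^{-2})$, $\II_{6,2}(5^{-2})$, $\II_{4,2}(7^{+1})$, as well as the canonical-weight examples $\II_{12,2}(3^{+7})$ and $\II_{4,2}(11^{+3})$ of Kodaira dimension $0$) are precisely the ones excluded from the statement.
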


As one referee pointed out, the five cases with $p=2$ in Theorem \ref{Th6.1} are in fact unirational by \cite[Theorem 3.8]{Ma12}, and the first two cases with $p=2$ in Theorem \ref{Th6.2} are in fact rational by \cite{Ma15}.

By \cite[Theorem 1.5]{Gri18}, if there is a strongly reflective cusp form of canonical weight, then the Kodaira dimension of the modular variety is zero. Note that the modular variety $\cD(M)/\Orth^+(M)$ is a quasi-projective variety of dimension $n$ when $M$ has signature $(n,2)$, in which case the canonical weight is $n$.  Strongly reflective modular forms of canonical weight are very rare. There are only four examples in the literature. The underlying lattices are respectively $U\oplus U(2)\oplus A_1$, $2U\oplus 2A_1$, $2U\oplus 2A_2$, $2U\oplus D_5$ (see \cite[Theorem 4.1]{CG11} and \cite[Theorem 5.8]{Gri18} ). We here give two new such exceptional modular forms. 

\begin{theorem}
Let $\Orth_r(M)$ denote the subgroup of $\Orth^+(M)$ generated by reflections. When $M$ is in one of the two genera
\begin{align*}
&\II_{12,2}(3^{+7}),& &\II_{4,2}(11^{+3}),&
\end{align*}
the Kodaira dimension of $\cD(M)/ \Orth_r(M)$ is zero. Moreover, the geometric genus of $\cD(M)/ \Orth_r(M)$ is one. 
\end{theorem}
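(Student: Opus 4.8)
The plan is to separate the statement into the Kodaira-dimension assertion and the geometric-genus assertion, and to treat the two genera $\II_{12,2}(3^{+7})$ and $\II_{4,2}(11^{+3})$ uniformly. By Theorem \ref{th:construction3}, each such $M$ carries a reflective cusp form $F$ with $c_1=1$ and $c_p=1$, hence strongly reflective, of weight $12$ and $4$ respectively. In both cases this weight equals the dimension $n$ of $\cD(M)/\Orth^+(M)$, that is, the canonical weight. Thus $F$ is a strongly reflective cusp form of canonical weight, and \cite[Theorem 1.5]{Gri18} applies verbatim to give $\kappa(\cD(M)/\Orth^+(M))=0$. This disposes of the first assertion with no further computation.

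For the geometric genus I would establish $p_g=h^{n,0}=1$ by trapping it between an explicit lower and an upper bound. The lower bound $p_g\geq 1$ is already contained in the mechanism of \cite[Theorem 1.5]{Gri18}: the differential form $\omega_F$ attached to $F$ extends to a nonzero global canonical form on a smooth model $\widetilde X$ of $\cD(M)/\Orth^+(M)$, because the order-one vanishing of $F$ along every reflective hyperplane compensates the sign picked up by the holomorphic $n$-form under the corresponding reflection, while the cusp condition ensures holomorphicity along the toroidal boundary. For the upper bound I would invoke the standard correspondence of \cite{GHS13}: pulling a global canonical form back to $\cD(M)$ yields an $\Orth^+(M)$-invariant holomorphic $n$-form vanishing to order at least one along the branch divisor $B=\bigcup_{\gamma}\gamma^\perp$ and vanishing at the cusps, hence coming from a cusp form $G$ of weight $n$ with $\div(G)\geq B$. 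Therefore $p_g$ is bounded above by the dimension of the space $W$ of weight-$n$ cusp forms whose divisor dominates $B$.

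It then remains to show $\dim W=1$, and here the crucial input is that $F$ is \emph{strongly} reflective, so that $\div(F)=B$ exactly. For any $G\in W$ the quotient $G/F$ is a weight-$0$ meromorphic modular form with trivial character (since $G$ and $F$ carry the same character) whose divisor is $\div(G)-B\geq 0$; thus $G/F$ is holomorphic on all of $\cD(M)$ and descends to a holomorphic function on the open modular variety $X=\cD(M)/\Orth^+(M)$. Since $n\geq 3$, the boundary of the Baily--Borel compactification $X^{*}$ has codimension at least two, so by normality $G/F$ extends to a holomorphic function on the compact variety $X^{*}$ and is therefore constant. Hence $W=\CC F$ is one-dimensional, and combining the two bounds yields $p_g=1$.

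The main obstacle I anticipate is not the ratio argument but the precise identification underlying the upper bound, namely that every global canonical form on $\widetilde X$ genuinely arises from a weight-$n$ cusp form with divisor $\geq B$, with no extra contributions from the singular or boundary strata. For the high-dimensional genus $\II_{12,2}(3^{+7})$ this is routine, since the quotient and boundary singularities of $X^{*}$ are canonical by the Reid--Tai criterion and the correspondence of \cite{GHS13} is clean. For $\II_{4,2}(11^{+3})$, where $n=4$ is small, I would instead verify the canonicity of the relevant singularities of $X^{*}$ directly via Reid--Tai, so that both the extension of $\omega_F$ and the exhaustion of $H^{n,0}$ by modular forms hold; this low-dimensional check is the delicate step.
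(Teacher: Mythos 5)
Your proposal has a genuine gap, and it sits exactly where you claim there is nothing to do. You write that since $F$ is a strongly reflective cusp form of canonical weight, \cite[Theorem 1.5]{Gri18} ``applies verbatim\dots with no further computation.'' But that theorem produces a canonical form on $\cD(M)/\Orth^+(M)$ only if the form $\omega_F = F\,dZ$ is invariant under the \emph{full} group $\Orth^+(M)$, i.e.\ only if the character $\chi$ of $F$ equals $\det$ on all of $\Orth^+(M)$. Your own justification of invariance --- ``the order-one vanishing of $F$ along every reflective hyperplane compensates the sign picked up under the corresponding reflection'' --- only proves $\chi(\sigma_\gamma)=-1=\det(\sigma_\gamma)$ for the reflections $\sigma_\gamma$, hence only $\chi=\det$ on the subgroup generated by reflective reflections. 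If $\Orth^+(M)$ were strictly larger than that subgroup, $\omega_F$ would descend merely to a cover $\cD(M)/\ker(\chi\cdot\det)$, and both conclusions ($\kappa=0$ and $p_g=1$) for the full quotient would fail to follow. The same unproved identity $\chi=\det$ is also used silently in your upper-bound step, where you assert ``$G$ and $F$ carry the same character'' so that $G/F$ descends to a function.

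Closing this gap is precisely the entire content of the paper's proof: it shows that for these two lattices $\Orth^+(M)$ \emph{is} generated by $2$-reflections and $2p$-reflections. This is not formal. It uses \cite[Theorem 1.1, Corollary 1.2]{GHS09} (a nontrivial, lattice-specific result) to see that $\widetilde{\Orth}^+(M)$ is generated by $2$-reflections, the decomposition of $\Orth^+(M)$ by $\widetilde{\Orth}^+(M)$ and $\Orth(D(M))$, and Witt's theorem for the quadratic space $D(M)$ over $\mathbb{F}_p$ to generate $\Orth(D(M))$ by reflections lifting to $2p$-reflections. Once that is in place, the multiplicity-one vanishing forces $\chi=\det$ on a generating set, hence everywhere, and Gritsenko's theorem then yields both the Kodaira dimension and the geometric genus statements; your separate two-sided bound for $p_g$ (which essentially re-derives the mechanism inside \cite[Theorem 1.5]{Gri18}) becomes unnecessary. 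As a secondary remark, your worry about Reid--Tai in the case $n=4$ is a reasonable instinct, but it is not where the difficulty of this theorem lies.
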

\begin{proof}
We use the reflective modular forms constructed in Theorem \ref{th:construction3}.  The proof is similar to that of  \cite[Theorem 4.1]{CG11}. We only need to show that the character of the reflective modular form on $\Orth_r(M)$ is $\det$. Since the modular form vanishes on all reflective $\gamma^\perp$ with multiplicity one and the reflections $\sigma_\gamma$ lie in $\Orth_r(M)$ which is the modular group,  the value of the character on every reflection is $-1$ (considering the Taylor expansion of the modular form about $\gamma^\perp$). Thus the character is the determinant. 
\end{proof}

For the above two lattices,  by \cite[Theorem 1.1, Corollary 1.2]{GHS09}, $\widetilde{\Orth}^+(M)$ is generated by $2$-reflections, which implies that $\widetilde{\Orth}^+(M)< \Orth_r(M)$.  By \cite[Corollary 5.4]{GHS09}, the modular variety $\cD(M)/ \Orth_r(M)$ is simply connected. It would be interesting to know if $\Orth^+(M)$ is generated by reflections, namely $\Orth_r(M)=\Orth^+(M)$. The answer is expected to be positive.

\subsection{The genera of lattices} Our method can also be used to calculate the class number of the genus of a lattice. Let $L$ be an even positive definite lattice and $M=2U\oplus L$. Suppose that there is a reflective modular form $F$ for $M$. For every class $L_1$ in the genus of $L$, the modular form $F$ has a Fourier--Jacobi expansion at the one-dimensional cusp determined by the decomposition $M\cong 2U\oplus L_1$. Thus there is a Jacobi form of weight $0$ and index $L_1$ whose Borcherds product equals $F$. By Theorem \ref{Prop:Jacobi}, the root system  of $L_1$ satisfies either $R(L_1)=\emptyset$ or $\rank(R(L_1))=\rank(L_1)$. In the latter case, it is possible to determine $L_1$ because it is an overlattice of $R(L_1)$. As an example, we compute the class number of $\II_{10,0}(3^{+7})$.

\begin{proposition}
The genus $\II_{10,0}(3^{+7})$ has two classes. The first one has model $E_6^\vee(3)\oplus 2A_2$ and the second can be constructed as an even overlattice of $A_3\oplus D_7(3)$.
\end{proposition}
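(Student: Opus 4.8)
The plan is to apply the reflective form for $M=2U\oplus E_6^\vee(3)\oplus 2A_2$, the unique lattice of genus $\II_{12,2}(3^{+7})$, which is constructed in Theorem \ref{th:construction3} with multiplicities $c_1=c_3=1$ and weight $k=12$; call it $F$. Its positive definite part $E_6^\vee(3)\oplus 2A_2$ is one class of $\II_{10,0}(3^{+7})$. Every class $L_1$ of this genus determines a one-dimensional cusp of $\cD(M)/\Orth^+(M)$ through the isomorphism $M\cong 2U\oplus L_1$, and the Fourier--Jacobi expansion of $F$ at that cusp is the Borcherds product of a weakly holomorphic Jacobi form $\phi_1$ of weight $0$ and index $L_1$. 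First I would read off the root system $R(L_1)$ from the $q^0$-term of $\phi_1$, exactly as in the proof of Proposition \ref{Prop:Jacobi}, noting that the $q^{-1}$-coefficient is $c_1=1$ and that the norm-$6$ roots occur with multiplicity $c_3=1$ at every cusp. By part (2) of that proposition, $R(L_1)$ is either empty or spans $L_1\otimes\RR$, and in the latter case has rank $10$.

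When $R(L_1)$ has full rank, $L_1$ is recovered as an even overlattice of the root lattice $\langle R(L_1)\rangle$ of determinant $3^7$ and level $3$, so the classes that carry roots can be listed by enumerating the admissible rank-$10$ root systems. The constraints are strong: the balance identities \eqref{eq:one} and \eqref{eq:two} must hold with $c_1=c_3=1$, so the irreducible components are $ADE$ systems of norm-$2$ roots, $3$-rescalings of $ADE$ systems of norm-$6$ roots, and---because $p=3$---possibly $G_2$ components, whose short and long roots have norms $2$ and $6$. Running through these with the matching condition on Coxeter numbers and requiring that an even overlattice of determinant $3^7$ exist, I expect exactly two solutions: the system $G_2\oplus G_2\oplus E_6(3)$, whose index-$3$ even overlattice is $E_6^\vee(3)\oplus 2A_2$, and the system $A_3\oplus D_7(3)$ (for which $h(A_3)=4=h(D_7)/3$), whose admissible even overlattice of index $4$ is the second class. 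I would then check that these two lattices are non-isometric and both lie in $\II_{10,0}(3^{+7})$.

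The main obstacle is \emph{completeness}, because the Jacobi-form method only detects classes with non-empty root system. At a hypothetical root-free class the Fourier--Jacobi coefficient of $F$ would merely be $q^{-1}+24+O(q)$, a perfectly legitimate weight-$0$ weakly holomorphic Jacobi form, so nothing in the construction forbids such a class; the enumeration above therefore proves only that the two lattices found are the classes carrying roots. To certify that no further class exists I would compute the Smith--Minkowski--Siegel mass of $\II_{10,0}(3^{+7})$ and verify that $1/|\Orth(L_1)|$, summed over the two exhibited lattices, equals this mass---equivalently, one reads off the class number from the genus data in \cite{LMFDB}. This mass identity is what ultimately guarantees both that no root-free class is hidden and that the root-system enumeration has overlooked nothing.
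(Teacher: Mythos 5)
Your setup and your enumeration of the possible root systems do follow the paper's proof: classes $L_1$ of $\II_{10,0}(3^{+7})$ correspond to one-dimensional cusps of the modular variety attached to the unique class $M$ of $\II_{12,2}(3^{+7})$, the $q^0$-term of the Jacobi form underlying $F$ at each such cusp determines $R(L_1)$, and the identities \eqref{eq:one} and \eqref{eq:two} with $c_1=c_3=1$ force the components to be $G_2$'s, $ADE$ systems, or $3$-rescalings thereof with equal (effective) Coxeter numbers, yielding exactly $E_6(3)\oplus 2G_2$ and $A_3\oplus D_7(3)$, each with a unique admissible even overlattice of determinant $3^7$. That part is correct and is essentially the paper's argument.

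The genuine gap is at the completeness step, and your diagnosis of that step is wrong in a way that matters. You claim that ``nothing in the construction forbids'' a root-free class because $q^{-1}+24+O(q)$ is a legitimate weakly holomorphic Jacobi form of weight $0$. But the form $F$ is not merely reflective of weight $12$: Theorem \ref{th:construction3} establishes that it is a \emph{cusp} form, and cuspidality is precisely what excludes root-free classes. If $R(L_1)=\emptyset$, then by \eqref{eq:one} the constant $C$ vanishes, so the Weyl vector at the corresponding cusp is isotropic; concretely, the zeroth Fourier--Jacobi coefficient of $F$ there --- its image under the Siegel operator --- is $q\prod_{n\geq 1}(1-q^n)^{24}=\Delta(\tau)\neq 0$, contradicting the fact that a cusp form vanishes at every one-dimensional cusp. (This is the same Siegel-operator computation the paper carries out for $2U\oplus T_8$, with the conclusion reversed.) So the paper's method is self-contained: no root-free class can exist. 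Your proposed substitutes do not close the gap as written: the Smith--Minkowski--Siegel mass of $\II_{10,0}(3^{+7})$ and the orders $\abs{\Orth(L_1)}$ of the two exhibited lattices are never actually computed, and ``reading off the class number from \cite{LMFDB}'' assumes the very statement being proven --- which would defeat the purpose of \S 6.2, namely to compute class numbers \emph{from} reflective modular forms.
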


\begin{proof}
Let $M$ be the unique class of $\II_{12,2}(3^{+7})$. We have constructed a strongly reflective cusp form $F$ of weight $12$ for $\Orth^+(M)$. Let $L$ be a class of $\II_{10,0}(3^{+7})$. Then $M\cong 2U\oplus L$. By Theorem \ref{Prop:Jacobi}, the root system $R(L)$ is 
not empty, otherwise $F$ has a norm zero Weyl vector at the one-dimensional cusp related to $L$. Thus $R(L)$ is a root system of rank $10$. Since $R(L)$ has only $2$- and $6$-reflections, it is a direct sum of some irreducible components of types $G_2$, $R$ and $R(3)$, where $R$ is a root system of $ADE$-type. All irreducible components of $R(L)$ have the same Coxeter number defined as the constant $C$ in the identity of type \eqref{eq:two}. The Coxeter numbers of $G_2$, $R$ and $R(3)$ are respectively $4$, $h_0$ and $h_0/3$, where $h_0$ is the usual Coxeter number of $R$. Let $a$ and $b$ be the number of $2$-roots and $6$-roots in $R(L)$, respectively. By \eqref{eq:one}, we have
\begin{align*}
\frac{1}{24}(a+b+24)-1&=h=\frac{1}{20}\left(2a+\frac{2}{3}b\right),
\end{align*}
which follows $b=7a$ and $h=a/3$, where $h$ is the Coxeter number of $R(L)$.
By direct calculation,  there are exactly two possibilities $R(L)=E_6(3)\oplus 2G_2$ or $A_3\oplus D_7(3)$. It is easy to see that the two root systems have  a unique even overlattice of determinant $3^7$ and level $3$ respectively. We then complete the proof.
\end{proof}

\bigskip

\noindent
\textbf{Acknowledgements} 
The work was done when the author was a postdoc at the Max Planck Institute for Mathematics in Bonn.  The author thanks the institute for its hospitality and financial support.  The author was also supported by the Institute for Basic Science (IBS-R003-D1).  The author thanks Valery Gritsenko for helpful discussions and the two referees for valuable comments.

\bibliographystyle{plainnat}
\bibliofont
\bibliography{refs}

\end{document}